\newtheorem{theorem}{Theorem}[section]
\newtheorem{lemma}[theorem]{Lemma}
\newtheorem{proposition}[theorem]{Proposition}
\newtheorem{corollary}[theorem]{Corollary}
\theoremstyle{definition}
\newcommand{\blackged}{\hfill$\blacksquare$}
\newcommand{\whiteged}{\hfill$\square$}
\newcounter{proofcount}
\renewenvironment{proof}[1][\proofname.]{\par
  \ifnum \theproofcount>0 \pushQED{\whiteged} \else \pushQED{\blackged} \fi%
  \refstepcounter{proofcount}
  \normalfont 
  \trivlist
  \item[\hskip\labelsep
        \itshape
    {\bf\em #1}]\ignorespaces
}{%
  \addtocounter{proofcount}{-1}
  \popQED\endtrivlist
}
\begin{document}

\begin{center}
\textbf{\large{On Direct Sum Decompositions of Krull-Schmidt Artinian Modules}}
\end{center}

\begin{center}
Juan Orendain
\end{center}

\noindent We study direct sum decompositions of modules satisfying the descending chain condition on direct summands. We call modules satisfying this condition Krull-Schmidt artinian. We prove that all direct sum decompositions of Krull-Schmidt artinian modules refine into finite indecomposable direct sum decompositions and we prove that this condition is strictly stronger than the condition of a module admitting finite indecomposable direct sum decompositions. We also study the problem of existence and uniqueness of direct sum decompositions of Krull-Schmidt artinian modules in terms of given classes of modules. We present also brief studies of direct sum decompositions of modules with deviation on direct summands and of modules with finite Krull-Schmidt length.

\tableofcontents

\section{Introduction}
	
The study of rings and modules satisfying one or two of the chain conditions on distinguished sets of ideals or submodules is a fundamental part of the theory of rings and their modules. We study modules satisfying the descending (equivalently ascending) chain condition on direct summands. We say that a module satisfying these conditions is Krull-Schmidt artinian. We thus study direct sum decompositions of Krull-Schmidt artinian modules and direct sum decompositions of modules satisfying related finiteness conditions. Notice that 
in this setting the ascending chain condition on direct summands already appears in [3]. We now sketch the content of this paper. In section 2 we present examples of Krull-Schmidt artinian rings and modules, and we compare the condition of a ring or a module being Krull-Schmidt artinian against other finiteness conditions. In section 3 we study direct sum decompositions of Krull-Schmidt artinian modules. We prove that all direct sum decompositions of Krull-Schmidt artinian modules refine into finite indecomposable direct sum decompositions, and using a result of Facchini and Herbera, we prove that this condition is strictly stronger than the condition of a module only having indecomposable direct sum decompositions. We also study the problem of existence and uniqueness of direct sum decompositions of Krull-Schmidt artinian modules in terms of given classes of modules. We obtain direct sum decompositions of Krull-Schmidt artinian modules in terms of well known classes of modules, and in some cases we prove that these direct sum decompositions are canonical. In sections 4 and 5 we study modules with deviation on direct summands and modules with finite Krull-Schmidt length respectively, we characterize modules satisfying these conditions, and we briefly study their direct sum decompositions. Examples of non-Krull-Schmidt artinian modules with deviation on direct summands and of Krull-Schmidt artinian modules with infinite Krull-Schmidt length are provided. Finally, in section 6 we consider what we call the $AKS_i$ conditions. We compare these conditions under the presence of finiteness conditions studied in previous sections.
We will assume throughout that a unital ring with unit $R$ is chosen. The word \textit{module} will thus mean a left $R$-module, and the word \textit{morphism} will mean a morphism in the category of left $R$-modules, $R$-Mod. We will write $\mathcal{S}(M)$ for the set of all direct summands of a module $M$. Observe that $\mathcal{S}(M)$ thus defined is a poset with order relation given by inclusion.   

\section{Krull-Schmidt Artinian Modules}

In this first section we present examples of Krull-Schmidt artinian rings and modules and we make a few remarks on closure properties of the class of all Krull-Schmidt artinian modules.

Modules with finite Goldie dimension, modules with finite dual Goldie dimension, and thus modules with Krull (equivalently dual Krull) dimension are easily seen to be examples of Krull-Schmidt artinian modules. Thus also artinian and noetherian modules are examples of Krull-Schmidt artinian modules. A semisimple module is Krull-Schmidt artinian if and only if it is artinian. More generally, a module with the exchange property is Krull-Schmidt artinian if and only if it admits a finite indecomposable direct sum decomposition. Any torsion free indecomposable abelian group of infinite rank is an example of non-finitely generated and non-finitely cogenerated Krull-Schmidt artinian module with infinite Goldie dimension. 
Thus the condition of a module being Krull-Schmidt artinian does not imply, in general, neither the condition of a module being finitely generated, nor the condition of a module being finitely cogenerated, nor the condition of a module having finite Goldie dimension. It follows also that submodules of Krull-Schmidt artinian modules need not, in general, be Krull-Schmidt artinian. Quotients of Krull-Schmidt artinian modules also need not, in general, be Krull-Schmidt artinian, which is easily seen from the fact that $\mathbb{Q}$ is uniform but $\mathbb{Q}/\mathbb{Z}$ is not Krull-Schmidt artinian. It is easily seen that direct summands of Krull-Schmidt artinian modules are Krull-Schmidt artinian, we prove in section 3 however, that finite direct sums of Krull-Schmidt artinian modules need not, in general, be Krull-Schmidt artinian. The ring $\mathbb{Z}_2^{\mathbb{N}}$, as a module over itself, is cyclic but not Krull-Schmidt artinian, thus the condition of a module being finitely generated does not, in general, imply the condition of a module being Krull-Schmidt artinian.

Given a ring $E$ and two idempotents $e,e'\in E$, we will write $e\leq e'$ when $ee'=e=e'e$. Observe that the set of all idempotents of $E$, equipped with this relation, is a poset. We will say that the ring $E$ is a Krull-Schmidt artinian ring if the poset of idempotents of $E$ satisfies the descending chain condition. The following proposition puts the condition of a module $M$ being Krull-Schmidt artinian, completely in terms of the ring of endomorphisms, $End(M)$, of $M$.

\begin{proposition}
Let $M$ be a module. Let $E$ denote the ring of endomorphisms, $End(M)$, of $M$. The following conditions are equivalent:
\begin{enumerate}
\item $M$ is Krull-Schmidt artinian.
\item The left $E$-module $_EE$ is Krull-Schmidt artinian.
\item The right $E$-module $E_E$ is Krull-Schmidt artinian.
\item $E$ is a Krull-Schmidt artinian ring.
\item The poset of all idempotents of $E$ satisfies the ascending chain condition.
\item $E$ has no infinite subsets of orthogonal idempotents.
\end{enumerate}
\end{proposition}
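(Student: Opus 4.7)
The plan is to route everything through the standard correspondence between direct summands of $M$ and idempotents of $E = End(M)$: a direct summand $N$ of $M$ corresponds to an idempotent $e \in E$ with $e(M) = N$, with one such $e$ for each choice of complement. The central technical point is that although individual summands do not biject with idempotents, descending chains of summands do correspond to descending chains of idempotents in both directions, which drives the equivalence (1) $\Leftrightarrow$ (4).

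First I would establish (1) $\Leftrightarrow$ (4). Given a chain $N_1 \supseteq N_2 \supseteq \cdots$ in $\mathcal{S}(M)$, I would use the fact that a direct summand contained in a direct summand is a direct summand of the latter (a short modular-law argument) to inductively choose compatible complements, producing idempotents $e_i$ with $e_{i+1} \leq e_i$ in the idempotent order and $e_i(M) = N_i$. Distinct summands force distinct idempotents, so strict chains pass through. For the converse, a strict chain $e_1 > e_2 > \cdots$ yields images $e_i(M) \supseteq e_{i+1}(M)$, and the relations $e_i e_{i+1} = e_{i+1} = e_{i+1} e_i$ combined with $e_i(M) = e_{i+1}(M)$ quickly force $e_i = e_{i+1}$, so strict inequalities of idempotents transfer to strict inclusions of images.

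The remaining equivalences with (4) are short computations in $E$. For (4) $\Leftrightarrow$ (5), I would verify that $e \mapsto 1 - e$ is an order-reversing involution on the poset of idempotents: from $ee' = e = e'e$ one computes directly $(1-e)(1-e') = 1 - e' = (1-e')(1-e)$, swapping DCC with ACC. For (4) $\Leftrightarrow$ (6), an infinite strictly ascending chain $e_1 < e_2 < \cdots$ produces an infinite family of nonzero pairwise orthogonal idempotents via $f_1 := e_1$ and $f_n := e_n - e_{n-1}$, while conversely an infinite orthogonal family $\{f_n\}$ yields a strictly ascending chain via the partial sums $e_n := f_1 + \cdots + f_n$.

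Finally, (1) $\Leftrightarrow$ (2) $\Leftrightarrow$ (3) would follow by applying the already-established equivalence (1) $\Leftrightarrow$ (4) to the modules $_EE$ and $E_E$. Their endomorphism rings are $E^{\text{op}}$ and $E$ respectively, but the defining relation $ee' = e = e'e$ is symmetric under reversal of multiplication, so the idempotent poset of $E^{\text{op}}$ coincides with that of $E$, and both conditions reduce to (4). The main obstacle is the inductive construction in the direction (1) $\Rightarrow$ (4): complements must be chosen compatibly all along the chain so that the resulting idempotents line up under $\leq$. Once this correspondence is in hand, everything else is a matter of bookkeeping with the idempotent calculus.
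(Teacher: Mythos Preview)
Your proposal is correct and follows the same strategy as the paper: both route all the equivalences through the correspondence between direct summands of $M$ and idempotents of $End(M)$. The paper compresses this into a single biconditional (``$e\leq e'$ iff $e(N)$ is a direct summand of $e'(N)$'') and leaves the involution $e\mapsto 1-e$ and the orthogonal--family argument for (5) and (6) implicit, whereas you spell out the chain-lifting construction and those computations explicitly; the underlying idea is the same.
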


\begin{proof}
Let $e,e'\in E$ be two idempotents. Then $e\leq e'$ if and only if for any ring $S$ and any $S$-module $N$ such that $E=End_S(N)$, the direct summand $e(N)$ of $N$ is a direct summand of the direct summand $e'(N)$ of $N$. The result follows from this and from the fact that for any ring $S$ and any $S$-module $N$ such that $E=End_S(N)$, every direct summand of $N$ is the image $e(N)$ of an idempotent $e\in E$.
\end{proof}

\noindent It follows, from proposition 2.1 that rings with finite left (right) Goldie dimension and semilocal rings (i.e. rings with finite dual Goldie dimension) are Krull-Schmidt artinian. In particular, left (right) Goldie rings, left (right) artinian rings, left (right) noetherian rings, and more generally, rings with left (right) Krull dimension are all Krull-Schmidt artinian. An exchange ring is Krull-Schmidt artinian if and only if it is semiperfect and all integral domains are Krull-Schmidt artinian. Moreover, any module $M$ such that the ring of endomorphisms, $End(M)$, of $M$ satisfies any of the conditions above is itself Krull-Schmidt artinian.

\section{Direct Sum Decompositions}
In this section we study direct sum decompositions of Krull-Schmidt artinian modules. It is easily seen that Krull-Schmidt artinian modules admit finite indecomposable direct sum decompositions. In the next proposition we prove that Krull-Schmidt artinian modules satisfy a stronger condition. Recall first that given two direct sum decompositions of a module $M$, $M=\bigoplus_{\alpha \in A}M_\alpha$, and $M=\bigoplus_{\beta \in B}N_\beta$, it is said that the direct sum decomposition $M=\bigoplus_{\alpha \in A}M_\alpha$ refines into $M=\bigoplus_{\beta \in B}N_\beta$ if for each $\alpha\in A$ there exists a direct sum decomposition $M_\alpha=\bigoplus_{\gamma_\alpha \in C_\alpha}M_{\gamma_\alpha}$ of $M_\alpha$ such that the two direct sum decompositions $M=\bigoplus_{\alpha \in A}(\bigoplus_{\gamma_\alpha\in C_\alpha}M_{\gamma_\alpha})$ and $M=\bigoplus_{\beta \in B}N_\beta$ of $M$ are equal. In this case, it is also said that $M=\bigoplus_{\beta \in B}N_\beta$ is a refinement of $M=\bigoplus_{\alpha \in A}M_\alpha$.

\begin{proposition}
If $M$ is Krull-Schmidt artinian, then all direct sum decompositions of $M$ refine into finite indecomposable direct sum decompositions.
\end{proposition}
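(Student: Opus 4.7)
The plan is to reduce the statement to two independent claims: first, that the index set $A$ of any direct sum decomposition $M=\bigoplus_{\alpha \in A} M_\alpha$ must be finite; and second, that every Krull-Schmidt artinian module admits a finite indecomposable direct sum decomposition. Granting these, the required refinement is obtained by indecomposably decomposing each $M_\alpha$ and concatenating.

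For the first claim I would pass to the endomorphism ring $E=End(M)$. A direct sum decomposition $M=\bigoplus_{\alpha \in A}M_\alpha$ (with every $M_\alpha \neq 0$) produces for each $\alpha$ a nonzero idempotent $e_\alpha \in E$, namely the projection onto $M_\alpha$ with kernel $\bigoplus_{\beta \neq \alpha}M_\beta$. For $\alpha \neq \beta$ the image of $e_\beta$ lies in $\ker e_\alpha$, so $e_\alpha e_\beta = 0$, and hence $\{e_\alpha\}_{\alpha \in A}$ is a family of pairwise orthogonal nonzero idempotents of $E$. Condition (6) of Proposition 2.1 then forces $A$ to be finite.

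For the second claim, each $M_\alpha$ is a direct summand of $M$ and therefore Krull-Schmidt artinian (as noted in Section 2), so it is enough to prove the statement for $M$ itself. I would argue by contradiction: assume $M$ admits no finite indecomposable direct sum decomposition. Then $M$ is in particular decomposable, so $M=X_1\oplus Y_1$ with both summands nonzero; if both $X_1$ and $Y_1$ admitted finite indecomposable decompositions, so would $M$, so at least one of them, say $Y_1$, does not. Iterating this construction produces a strictly descending chain $M \supsetneq Y_1 \supsetneq Y_2 \supsetneq \cdots$, and transitivity of the direct summand relation shows that each $Y_i$ belongs to $\mathcal{S}(M)$, contradicting DCC.

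The main subtlety is recognizing that an arbitrary direct sum decomposition, possibly with infinite index set, still yields a set of pairwise orthogonal idempotents in $E$, so that the endomorphism-ring formulation of Proposition 2.1 can be invoked to bound $|A|$. Once finiteness of $A$ is in hand, both remaining pieces rest on routine applications of DCC on $\mathcal{S}(M)$, and the refinement is produced by gluing: if each $M_\alpha = \bigoplus_{k=1}^{n_\alpha} M_{\alpha,k}$ is an indecomposable decomposition obtained from the second claim, then $M = \bigoplus_{\alpha \in A}\bigoplus_{k=1}^{n_\alpha}M_{\alpha,k}$ is the desired finite indecomposable refinement of the original decomposition.
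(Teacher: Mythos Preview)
Your proposal is correct and follows essentially the same two-step structure as the paper: show that the index set $A$ is finite, then show each $M_\alpha$ (being a direct summand of $M$, hence Krull-Schmidt artinian) has a finite indecomposable decomposition, and glue. The only differences are that the paper argues finiteness of $A$ by noting that $\{M_\alpha\}$ is an independent set of direct summands (rather than passing explicitly to orthogonal idempotents via Proposition~2.1(6)), and the paper simply invokes the fact, stated at the opening of Section~3 as ``easily seen,'' that Krull-Schmidt artinian modules admit finite indecomposable decompositions, whereas you spell out the standard DCC contradiction argument.
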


\begin{proof}
Let $M=\bigoplus_{\alpha\in A}M_\alpha$ be a direct sum decomposition of $M$. Clearly the direct sum decompostion $M=\bigoplus_{\alpha\in A}M_\alpha$ refines into a finite indecomposable direct sum decomposition of $M$ if and only if $A$ is finite and $M_\alpha$ admits finite indecomposable direct sum decompositions for every $\alpha\in A$. $A$ is finite because the set $\left\{M_\alpha:\alpha\in A\right\}$ is an independent set of direct summands of $M$, and since for every $\alpha\in A$, $M_\alpha$, being direct summand of the Krull-Schmidt artinian module $M$, is itself Krull-Schmidt artinian, it admits finite indecomposable direct sum decompositions. This concludes the proof.
\end{proof}

\noindent To see that the condition of all direct sum decompositions of a module refining into finite indecomposable direct sum decompositions is indeed stronger than the condition of the module only admitting finite indecomposable direct sum decompositions we use the following result due to A. Facchini and D. Herbera. A proof of this result can be found in [6, corollary 5.6].

\begin{corollary}
For every additive submonoid, $A$, of $\mathbb{N}$ there exists a ring $R$ and an $R$-module $M$ such that the module $M^n$ admits finite indecomposable direct sum decompositions if and only if $n\in A$. 
\end{corollary}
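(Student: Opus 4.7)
The plan is to reformulate the problem in terms of monoids of direct summands and then to apply a realization theorem. To a module $M$ I would associate the commutative monoid $V(M)$ whose elements are the isomorphism classes $[N]$ of modules $N$ arising as direct summands of some $M^k$, with addition given by $[N]+[N']=[N\oplus N']$. Under this correspondence, $M^n$ admits a finite indecomposable direct sum decomposition if and only if the element $n\cdot [M]\in V(M)$ can be written as a finite sum of atoms, where an atom is the class of an indecomposable summand. The problem thus reduces to prescribing, via a suitable choice of $M$, which multiples of the distinguished element $[M]$ are atomically decomposable in $V(M)$.

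First, given the additive submonoid $A\subseteq \mathbb{N}$, I would construct an abstract reduced commutative monoid $V$, together with a distinguished element $v\in V$, such that $nv$ decomposes as a finite sum of atoms of $V$ precisely when $n\in A$. A natural attempt is to present $V$ by a generator $v$ together with formal atoms $a_n$ for each positive $n\in A$, modulo the relations $nv=a_n$. For $n\in A$ one then has the trivial atomic decomposition $nv=a_n$, and for $n\notin A$ one would need to verify, by analysing normal forms for words in the presented monoid, that every putative expression of $nv$ as a sum of indecomposables must reintroduce a summand of the form $mv$ with $m\notin A$, so that no atomic decomposition exists.

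Second, I would invoke a realization theorem of Facchini and Herbera, which guarantees that a sufficiently nice class of reduced commutative monoids (for instance those satisfying the Riesz refinement property together with a mild countability condition) can be realized in the form $V(M)$ for some module $M$ over a (typically von Neumann regular) ring $R$, with the distinguished generator $v$ corresponding to $[M]$. Combined with the construction above, this would produce the required pair $(R,M)$ and finish the proof.

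The main obstacle is the first step: producing a monoid that exhibits the precise atomic-decomposition behaviour at $v$ dictated by $A$ while simultaneously satisfying the refinement axioms required to invoke the realization theorem. Either of these requirements can be met separately in straightforward ways, but arranging both of them together, uniformly in the submonoid $A$, is the combinatorial heart of the argument and is what elevates this from a simple exercise to a nontrivial theorem of Facchini and Herbera.
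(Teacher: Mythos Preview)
The paper does not give its own proof of this corollary. It is stated as a result of Facchini and Herbera, with an explicit pointer to [6, corollary 5.6], and is then used only as a black box to manufacture the example that follows. So there is nothing in the paper to compare your argument against; your sketch is, in effect, an attempt to reconstruct the cited proof rather than the present paper's proof.

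That said, your outline is broadly faithful to how Facchini and Herbera actually proceed: one translates the question into a statement about atomic decompositions of $n\cdot[M]$ in the monoid $V(M)$, builds an abstract commutative monoid with a distinguished order-unit exhibiting the desired behaviour, and then realizes it as $V(M)$ for a suitable module. One point worth correcting in your description is the realization step. The theorem they invoke is the Bergman--Dicks realization (reference [2] in this paper), which produces a hereditary $k$-algebra $R$ with $V(R)$ isomorphic to any prescribed conical commutative monoid with order-unit; Riesz refinement is \emph{not} required. Your worry that refinement must be arranged simultaneously with the prescribed atomic behaviour is therefore misplaced, and the combinatorial construction of the monoid is less delicate than you suggest, though still the substantive part of the argument.
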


\noindent Using corollary 3.2 we construct an example of a ring $R$ and an $R$-module $N$ such that $N$ admits finite indecomposable direct sum decompositions, but such that not all its direct sum decompositions refine into finite idecomposable direct sum decompositions. To this end, write $A=2\mathbb{N}$. By corollary 3.2 there exists then a ring $R$ and an $R$-module $M$ such that while the module $M^2$ admits finite indecomposable direct sum decompositions, $M$ does not. Thus $N=M^2$ is a module admitting finite indecomposable direct sum decompositions, but such that its direct sum decomposition $N=M\oplus M$ does not refine into a finite indecomposable direct sum decomposition. Note that from this example and from proposition 3.2 it follows that while both the class of all modules such that all their direct sum decompositions refine into finite indecomposable direct sum decompositions, and the class of all Krull-Schmidt artinian modules, are closed under direct summands, niether one of them is closed under taking finite direct sums.

We next investigate the problem of decomposing modules in terms of given classes of modules. More precisely, given a module $M$ and a class of modules $\Omega$, we investigate the problem of existence and uniqueness of direct sum decompositions of $M$, of the form $M=A\oplus B$
such that $A\in \Omega$ and such that $B$ has no non-trivial direct summands in $\Omega$. The next proposition establishes, under certain assumptions on the class $\Omega$, existence in the case $M$ is Krull-Schmidt artinian.

\begin{proposition}
Let $\Omega$ be a class of modules closed under finite direct sums. If $M$ is Krull-Schmidt artinian then there exists a direct sum decomposition $M=A\oplus B$ of $M$ such that $A\in \Omega$ and such that $B$ has no non-trivial direct summands in $\Omega$.  
\end{proposition}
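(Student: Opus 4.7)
The plan is to exhibit $A$ as a maximal element of the family of direct summands of $M$ that lie in $\Omega$, then take $B$ to be any complement and check that closure under finite direct sums forbids $B$ from harboring a nontrivial summand in $\Omega$.

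First I would invoke Proposition 2.1 to pass from Krull-Schmidt artinianness to the ACC on direct summands. Since ACC on a poset is equivalent to the statement that every nonempty subset has a maximal element, the family
$$\mathcal{F} = \{X \in \mathcal{S}(M) : X \in \Omega\}$$
admits a maximal element as soon as it is nonempty. Nonemptiness is immediate under the standard reading of ``closed under finite direct sums'' (the empty sum, i.e.\ the zero module, is an element of $\Omega$, and $0 \in \mathcal{S}(M)$). Let $A$ be a maximal element of $\mathcal{F}$ and fix a decomposition $M = A \oplus B$; such a $B$ exists because $A \in \mathcal{S}(M)$.

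Next I would verify the property of $B$ by contradiction. Suppose $B$ has a nontrivial direct summand $B_1 \in \Omega$, say $B = B_1 \oplus B_2$. Then $M = A \oplus B_1 \oplus B_2$, so $A \oplus B_1 \in \mathcal{S}(M)$. By closure of $\Omega$ under finite direct sums, $A \oplus B_1 \in \Omega$, so $A \oplus B_1 \in \mathcal{F}$. Since $B_1 \neq 0$ and $A \cap B_1 = 0$, the inclusion $A \subsetneq A \oplus B_1$ is strict, contradicting the maximality of $A$. This forces $B$ to have no nontrivial direct summand in $\Omega$, completing the argument.

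The proof is essentially formal once the ACC is in hand; the only delicate point is making sure that $\mathcal{F}$ is nonempty, which is why I would comment explicitly that the empty direct sum convention places $0$ in $\Omega$. Everything else—selecting $A$ maximally, enlarging it to $A \oplus B_1$ to contradict maximality—is a standard ``maximal object plus absorption'' argument, and I expect no serious obstacle beyond that bookkeeping remark.
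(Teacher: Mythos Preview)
Your proposal is correct and follows essentially the same route as the paper's proof: pick $A$ maximal among the direct summands of $M$ lying in $\Omega$ (using the ACC side of Krull--Schmidt artinianness), take any complement $B$, and contradict maximality via $A\oplus B_1$ if $B$ had a nontrivial summand $B_1\in\Omega$. The only cosmetic difference is that the paper handles the trivial case by assuming $\Omega\cap\mathcal{S}(M)\neq\{0\}$ rather than appealing to the empty-sum convention.
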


\begin{proof}
Suppose $\Omega\cap\mathcal{S}(M)\neq\left\{0\right\}$. From the fact that $M$ is Krull-Schmidt artinian it follows that $\Omega\cap\mathcal{S}(M)\setminus\left\{0\right\}$ admits non-trivial maximal elements. Let $A$ be maximal in $\Omega\cap\mathcal{S}(M)\setminus \left\{0\right\}$. Now, let $B$ be such that $M=A\oplus B$. Suppose $B$ has non-trivial direct summands in $\Omega$. Let $L$ be a non-trivial direct summand of $B$ in $\Omega$. Then  $A<A\oplus L$, and $A\oplus L\in \Omega\cap\mathcal{S}(M)\setminus\left\{0\right\}$, which contradicts the maximality of $A$ in $\Omega\cap\mathcal{S}(M)\setminus\left\{0\right\}$. We conclude that $B$ has no non-trivial direct summands in $\Omega$.
\end{proof}

\noindent Given a class of modules $\Omega$, we can always consider its closure, $\tilde{\Omega}$, under the operation of taking finite direct sums, that is, $\tilde{\Omega}$ will be the class of all modules admitting finite direct sum decompositions into modules in $\Omega$. $\tilde{\Omega}$ is thus closed under finite direct sums, and $\tilde{\Omega}=\Omega$ if and only if $\Omega$ is itself closed under finite direct sums. Thus, the condition of $\Omega$ being closed under finite direct sums on proposition 3.3 can be eliminated at the expense of obtaining direct sum decompositions of the form: $M=A\oplus B$ where $B$ does not admit non-trivial direct summands in $\Omega$ and $A\in\tilde{\Omega}$, that is $A$, might not be in $\Omega$, but it does admit finite direct sum decompositions into modules in $\Omega$. Both the statement of proposition 3.3 and the statement obtained by the above observation are clearly equivalent.

Given two classes of modules $\Omega_1$ and $\Omega_2$, closed under finite direct sums, and a Krull-Schmidt artinian module $M$, it is easily seen, from the proof of proposition 3.3, that if $\mathcal{S}(M)\cap\Omega_1\subseteq \mathcal{S}(M)\cap\Omega_2$, then, for every direct sum decomposition $M=A_1\oplus B_1$ of $M$ such that $A_1\in\Omega_1$ and such that $B_1$ has no non-trivial direct summands in $\Omega_1$, there exists a direct sum decomposition $M=A_2\oplus B_2$ of $M$ such that $A_2\in\Omega_2$, such that $B_2$ has no non-trivial direct summands in $\Omega_2$, and such that $A_1$ is a direct summand of $A_2$.

Given $\Psi$, a logical proposition taking values on the class of all $R$-modules, we write $\left[\Psi(M)\right]$ for the truth value of $\Psi$ on the module $M$, that is, $\left[\Psi(M)\right]$ will be equal to $1$ or $0$ depending on whether $\Psi(M)$ is true or false. We use the convention that $0 \cdot\infty=0$. As a consequence of proposition 3.3 we have the following corollary.

\begin{corollary}
Let $\Theta$ be a class of modules. Let $\xi$ be a function with domain $\Theta$ and codomain $\mathbb{N}\cup\left\{\infty\right\}$. Suppose $\Theta$ is closed under finite direct sums. If the equation
\[\xi(A\oplus B)=max\left\{\xi(A),\xi(B)\right\}\]
holds for all $A,B\in\Theta$, then, for all $M\in\Theta$, there exists a finite direct sum decomposition 
\[M=(\bigoplus_{i=1}^nA_i)\oplus B\]
of $M$, such that $\xi(A_i)=\left[A_i\neq\left\{0\right\}\right]i$ for all $1\leq i\leq n $ and $\xi(B)=\left[B\neq \left\{0\right\}\right]\infty$. 
\end{corollary}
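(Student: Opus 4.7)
My plan is to iterate Proposition 3.3 against the sublevel sets of $\xi$. For each $i\in\mathbb{N}$ set $\Omega_i=\{N\in\Theta:\xi(N)\leq i\}$. The hypothesis $\xi(A\oplus B)=\max\{\xi(A),\xi(B)\}$, together with the closure of $\Theta$ under finite direct sums, immediately gives that each $\Omega_i$ is closed under finite direct sums, so Proposition 3.3 applies with each $\Omega_i$ under the implicit assumption, natural in this section, that $M$ is Krull-Schmidt artinian (a property inherited by every direct summand of $M$, as noted in Section 2).

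Starting from $M_0=M$, I would inductively apply Proposition 3.3 to $M_{i-1}$ with class $\Omega_i$, producing a decomposition $M_{i-1}=A_i\oplus M_i$ with $A_i\in\Omega_i$ and $M_i$ admitting no non-zero direct summand in $\Omega_i$. This builds a descending chain $M=M_0\supseteq M_1\supseteq\cdots$ of direct summands of $M$. By the descending chain condition on direct summands, this chain stabilizes at some index $n$, forcing $A_{n+1}=A_{n+2}=\cdots=0$; setting $B=M_n$ yields the decomposition $M=A_1\oplus\cdots\oplus A_n\oplus B$.

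The verification of $\xi$-values is the main content. For $i\geq 2$, $A_i$ is a direct summand of $M_{i-1}$, and $M_{i-1}$ has no non-zero direct summand in $\Omega_{i-1}$ by the preceding step; hence either $A_i=0$ or $\xi(A_i)\geq i$, and combined with $A_i\in\Omega_i$ this forces $\xi(A_i)=i$ when $A_i\neq 0$. For the tail $B=M_n$, stabilization implies $B$ admits no non-zero direct summand in $\Omega_j$ for every $j\geq n+1$; applied to $B$ as a summand of itself, this forces $\xi(B)>j$ for every $j$, hence $\xi(B)=\infty$ when $B\neq 0$. The main obstacle is the boundary case $i=1$: the construction only guarantees $\xi(A_1)\leq 1$, so ruling out $\xi(A_1)=0$ with $A_1\neq 0$ requires the natural convention that $\xi(N)=0$ characterizes the zero module (consistent with the use of $0\cdot\infty=0$ in the statement). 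Otherwise one first strips off $M$'s maximal $\xi=0$ summand via Proposition 3.3 with $\Omega_0$ and absorbs it into any non-zero $A_j$ or into $B$; this preserves the target $\xi$-values by the max property, since $\max\{0,i\}=i$ and $\max\{0,\infty\}=\infty$.
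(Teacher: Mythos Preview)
Your argument is correct and takes a genuinely different route from the paper. The paper applies Proposition 3.3 only once: it defines a single class $\Omega$ consisting of all $N\in\Theta$ that already admit a decomposition $N=\bigoplus_{i=1}^{n}A_i$ with $\xi(A_i)=[A_i\neq 0]\,i$, verifies (using the $\max$ identity) that $\Omega$ is closed under finite direct sums by merging two such decompositions levelwise, and then Proposition 3.3 immediately yields $M=A\oplus B$ with $A\in\Omega$ and $B$ having no non-trivial direct summand in $\Omega$; since any non-zero $N$ with finite $\xi(N)=k$ lies in $\Omega$ (take $A_k=N$), this forces $\xi(B)=\infty$ when $B\neq 0$. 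Your approach instead peels off the levels one at a time via $\Omega_i=\{\xi\leq i\}$, invoking Proposition 3.3 repeatedly and using the descending chain condition on $\mathcal{S}(M)$ explicitly to terminate; the paper's single application hides this DCC use inside the maximality argument of Proposition 3.3. The paper's route is slicker, while yours makes the inductive structure and the role of the Krull--Schmidt artinian hypothesis more transparent. Your observation about the boundary case $\xi(A_1)=0$ is well taken and applies equally to the paper's argument: both proofs tacitly rely on the convention that $\xi(N)=0$ only for $N=0$, which is consistent with the bracket notation and the convention $0\cdot\infty=0$ in the statement.
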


\begin{proof}
Let $\Omega$ be the class of all $N\in\Theta$ such that there exists a finite direct sum decomposition $N=\bigoplus_{i=1}^nA_i$
of $N$ such that $\xi(A_i)=\left[A_i\neq\left\{0\right\}\right]i$ for all $1\leq i\leq n $. We prove $\Omega$ is closed under finite direct sums. Let $N_j\in\Omega$, $j=1,2$. Let $N_j=\bigoplus_{i=1}^{n_j}A_{i,j}$ be a direct sum decomposition of $N_j$ such that $\xi(A_{i,j})=\left[A_{i,j}\neq\left\{0\right\}\right]i$ for all $1\leq i\leq n_j $ and $j=1,2$. Suppose, without any loss of generality, that $n_1\leq n_2$. Then, since 
\[\xi(A_{i,1}\oplus A_{i,2})=max\left\{\xi(A_{i,1},A_{i,2})\right\}=[A_{i,1}\oplus A_{i,2}\neq\left\{0\right\}]i\]

for all $i\leq n_1$, if we denote by $B_i$, $A_{i,1}\oplus A_{i,2}$ if $1\leq n_1$ and $A_{i,2}$ if $n_1+1\leq i\leq n_2$, then, the direct sum decomposition 
\[N_1\oplus N_2=\bigoplus_{i=1}^{n_2}B_i\]
of $N_1\oplus N_2$ is such that $\xi(B_i)=[B_i\neq\left\{0\right\}]i$ for all $i$. This proves our claim. The result follows from this and proposition 3.3.  
 
\end{proof}

\noindent Given a module $M$, we write, $pdimM$, $EdimM$, $KdimM$, and $dKdimM$ for the projective, injective, Krull, and dual Krull dimension of $M$ respectively. Observe that the functions $pdim$ and $Edim$ that associate to each module $M$, $pdimM$ and $EdimM$ respectively, and the functions $Kdim$ and $dKdim$ that associate to each module $M$ with Krull dimension, $KdimM$ or $dKdimM$ respectevely, when this ordinal is finite, or $\infty$ when it is not, satisfy the condition of corollary 3.4 with $\Theta=R$-Mod in the first two cases, and with $\Theta$ the class of all modules with Krull dimension in the remaining cases. From this and from corollary 3.4 follows the following corollary.

\begin{corollary}
For any module $M$
\begin{enumerate}
\item If $M$ is Krull-Schmidt artinian, then there exists a finite direct sum decomposition $M=(\bigoplus_{i=1}^nA_i)\oplus B$ of $M$ such that $pdimA_i=\left[A_i\neq\left\{0\right\}\right]i$ for all $1\leq i\leq n$ and $pdimB=\left[B\neq \left\{0\right\}\right]\infty$.
\item If $M$ is Krull-Schmidt artinian, then there exists a finite direct sum decomposition $M=(\bigoplus_{i=1}^nA_i)\oplus B$ of $M$ such that $EdimA_i=\left[A_i\neq\left\{0\right\}\right]i$ for all $1\leq i\leq n$ and $EdimB=\left[B\neq \left\{0\right\}\right]\infty$.
\item If $M$ has Krull dimension then there exists a finite direct sum decomposition $M=(\bigoplus_{i=1}^nA_i)\oplus B$ of $M$ such that $KdimA_i=\left[A_i\neq\left\{0\right\}\right]i$ for all $1\leq i\leq n$ and $KdimB=\left[B\neq\left\{0\right\}\right]\infty$.
\item If $M$ has Krull dimension then there exists a finite direct sum decomposition $M=(\bigoplus_{i=1}^nA_i)\oplus B$ of $M$ such that $dKdimA_i=\left[A_i\neq\left\{0\right\}\right]i$ for all $1\leq i\leq$ and $dKdimB=\left[B\neq\left\{0\right\}\right]\infty$.
\end{enumerate}
\end{corollary}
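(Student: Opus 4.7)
The plan is to apply Corollary 3.4 four times, once per dimension. In each case the choices of $\Theta$ and $\xi$ are those indicated in the paragraph preceding the statement: $\Theta = R$-Mod with $\xi = pdim$ or $\xi = Edim$ in parts (1) and (2), and $\Theta$ equal to the class of all modules with Krull dimension with $\xi$ equal to $Kdim$ or $dKdim$ in parts (3) and (4), where in the latter two cases transfinite ordinal values are collapsed to $\infty$. The two hypotheses of Corollary 3.4 that must be verified in each case are closure of $\Theta$ under finite direct sums and the identity $\xi(A \oplus B) = \max\{\xi(A), \xi(B)\}$ for all $A, B \in \Theta$.

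For parts (1) and (2), closure of $R$-Mod under finite direct sums is automatic. The identities $pdim(A \oplus B) = \max\{pdim(A), pdim(B)\}$ and $Edim(A \oplus B) = \max\{Edim(A), Edim(B)\}$ are standard consequences of the additivity of $Ext^n(-,-)$ in either variable, and I would simply cite them. For parts (3) and (4) I would use the classical facts that the class of modules with Krull dimension is closed under finite direct sums and that, as ordinals, $Kdim(A \oplus B) = \max\{Kdim(A), Kdim(B)\}$, with the parallel statement for $dKdim$. The truncation to $\infty$ preserves the $\max$ identity: if both ordinal values are finite then truncation is the identity; if at least one is infinite then the ordinal maximum is infinite and both sides of the truncated equation equal $\infty$.

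To apply Corollary 3.4 to the module $M$ in the statement, parts (1) and (2) use only that $M \in R$-Mod is Krull-Schmidt artinian, which is the hypothesis. In parts (3) and (4) the hypothesis is that $M$ has Krull dimension, and I would invoke the observation from section 2 that any such module is Krull-Schmidt artinian, so that $M \in \Theta$ and Corollary 3.4 again applies. Since each verification is routine and the standard direct sum identities for the four dimensions can be cited, no genuine obstacle is expected; the essential content is simply the observation that all four dimensions satisfy the $\max$ condition after the appropriate truncation, after which each of the four decompositions is delivered directly by Corollary 3.4.
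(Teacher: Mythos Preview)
Your proposal is correct and matches the paper's own argument exactly: the paper does not give a separate proof of this corollary but simply observes, in the paragraph preceding it, that $pdim$, $Edim$, $Kdim$, and $dKdim$ (the latter two truncated to $\infty$) satisfy the $\max$ identity on the indicated classes $\Theta$, and then invokes Corollary~3.4. Your additional remarks---verifying closure of $\Theta$ under finite direct sums, checking that the truncation preserves the $\max$ identity, and noting that modules with Krull dimension are Krull-Schmidt artinian so that Proposition~3.3 (via Corollary~3.4) actually applies---are all apt and in fact fill in details the paper leaves implicit.
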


\noindent Two direct sum decompositions $M=\bigoplus_{\alpha\in A}M_\alpha$, and $M=\bigoplus_{\beta\in B}N_\beta$ of a module $M$, are said to be decomposition-isomorphic if there exists a bijection $\phi :A\to B$ such that for each $\alpha\in A$, $M_\alpha$ and $N_{\phi (\alpha)}$ are isomorphic. In the next proposition we investigate the question of uniqueness, up to decomposition-isomorphisms, of direct sum decompositions as in proposition 3.3. Recall before that a module $M$ is said to be strongly indecomposable if its endomorphism ring, $End(M)$, is local. A module $M$ is thus strongly indecomposable if and only if it is indecomposable and has the finite exchange property.

\begin{theorem}
Let $\Omega$ be a class of modules such that:
\begin{enumerate}
\item $\Omega$ is closed under finite direct sums.
\item $\Omega$ is closed under direct summands.
\item Every indecomposable $N\in\Omega$ is strongly indecomposable.
\end{enumerate} 
Then, if $M$ is Krull-Schmidt artinian there exists a direct sum decomposition $M=A\oplus B$ of $M$, unique up to decomposition-isomorphisms such that $A\in \Omega$ and such that $B$ has no non-trivial direct summands in $\Omega$. 
\end{theorem}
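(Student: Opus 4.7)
The plan is to dispatch existence immediately via Proposition 3.3, since $\Omega$ is closed under finite direct sums by hypothesis, and to concentrate the work on uniqueness. So suppose $M = A_1 \oplus B_1 = A_2 \oplus B_2$ are two decompositions with $A_i \in \Omega$ and with each $B_i$ admitting no non-zero direct summand in $\Omega$; I would then produce isomorphisms $A_1 \cong A_2$ and $B_1 \cong B_2$.

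The first step would be to observe that each $A_i$ has the finite exchange property. Being a direct summand of the Krull-Schmidt artinian module $M$, each $A_i$ is itself Krull-Schmidt artinian, hence admits a finite indecomposable direct sum decomposition. Closure of $\Omega$ under direct summands places every indecomposable summand of $A_i$ in $\Omega$, and hypothesis (3) then forces each such summand to be strongly indecomposable, i.e.\ to have local endomorphism ring. A classical theorem of Warfield then yields the finite exchange property for each $A_i$.

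Applying the exchange property of $A_1$ to the decomposition $M = A_2 \oplus B_2$ produces direct summands $A_2' \subseteq A_2$ and $B_2' \subseteq B_2$ with $M = A_1 \oplus A_2' \oplus B_2'$. Writing $A_2 = A_2' \oplus A_2''$ and $B_2 = B_2' \oplus B_2''$, the two decompositions $M = (A_2' \oplus B_2') \oplus A_1$ and $M = (A_2' \oplus B_2') \oplus (A_2'' \oplus B_2'')$ share the summand $A_2' \oplus B_2'$, yielding $A_1 \cong A_2'' \oplus B_2''$. Closure of $\Omega$ under direct summands places $B_2''$ in $\Omega$, and since $B_2''$ is a direct summand of $B_2$, the hypothesis on $B_2$ forces $B_2'' = 0$, giving $A_1 \cong A_2''$ and hence $A_2 \cong A_2' \oplus A_1$. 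A symmetric application of the exchange property of $A_2$ to $M = A_1 \oplus B_1$ produces $A_1 \cong A_1' \oplus A_2$ for some direct summand $A_1'$ of $A_1$. Combining these identities gives $A_1 \cong A_1' \oplus A_2' \oplus A_1$, and at this point I would invoke Krull-Schmidt-Azumaya: since $A_1$ is a finite direct sum of modules with local endomorphism ring, its indecomposable decomposition is unique up to permutation and isomorphism, and comparing multi-sets of indecomposables across the identity forces $A_1' = A_2' = 0$. In particular $A_1 \cong A_2$.

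With $A_2' = 0$ and $B_2'' = 0$ in hand, the exchange identity collapses to the internal decomposition $M = A_1 \oplus B_2$, which combined with $M = A_1 \oplus B_1$ exhibits $B_1$ and $B_2$ as two internal complements of the same submodule $A_1 \subseteq M$, so $B_1 \cong M/A_1 \cong B_2$. The step I expect to be most delicate is the cancellation argument on $A_1 \cong A_1' \oplus A_2' \oplus A_1$: it depends on $A_1'$ and $A_2'$ themselves decomposing finitely into strongly indecomposable summands from $\Omega$ (which they do, being direct summands of members of $\Omega$), so that Krull-Schmidt-Azumaya applies uniformly on both sides of the identity and rules out any excess indecomposable summands.
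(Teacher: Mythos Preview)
Your proof is correct and follows the same underlying strategy as the paper's: existence via Proposition~3.3, then uniqueness by combining the finite exchange property with the constraint that each $B_i$ has no non-trivial summand in $\Omega$, and finishing with a Krull--Schmidt--Azumaya cancellation.

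The organization differs in one respect worth noting. The paper first decomposes each $A_i$ into indecomposables and then exchanges \emph{one strongly indecomposable summand at a time}: inductively, each $A_i$-piece replaces exactly one $A'_j$-piece (never a piece of $B'$, since $B'$ has no $\Omega$-summands), until the two indecomposable lists are matched and shown to have equal length. You instead invoke Warfield's theorem to conclude that the \emph{whole} module $A_1$ has the finite exchange property, perform a single exchange against $A_2\oplus B_2$, and then run a symmetric argument and a global cancellation $A_1\cong A_1'\oplus A_2'\oplus A_1$. Your route is shorter and avoids the induction, at the price of importing Warfield's result; the paper's route is more self-contained but requires tracking the inductive replacement step by step. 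Both arguments use the same two facts in the same way: the $B$-complements absorb nothing from $\Omega$ during exchange, and the $A$-parts are finite direct sums of modules with local endomorphism rings, so Krull--Schmidt--Azumaya governs their decompositions.
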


\begin{proof}
The existence of a direct sum decomposition $M=A\oplus B$ of $M$ such that $A\in \Omega$ and such that $B$ has no non-trivial direct summands in $\Omega$ has already being established in proposition 3.3. We prove now that $M$ admits only one such direct sum decomposition up to decomposition-isomorphisms. Let $M=A'\oplus B'$ be another direct sum decomposition of $M$ such that $A'\in\Omega$ and such that $B'$ has no non-trivial direct summands in $\Omega$. Since $A$ and $A'$ are both Krull-Schmidt artinian, then both admit finite indecomposable direct sum decompositions. Let $A=\bigoplus_{i=1}^nA_i$ and $A'=\bigoplus_{j=1}^mA'_j$ be finite indecomposable direct sum decompositions of $A$ and $A'$ respectively. Since $A_i\in\Omega$, and $A'_j\in \Omega$ for all $1\leq i\leq n$ and $1\leq j\leq m$, and they are all indecomposable, then the $A_i$'s and the $A'_j$'s are all strongly indecomposable. Suppose $m\geq n$. By the fact that $A_1$ has the finite exchange property, and by the fact that that $B'$ has no non-trivial direct summands in $\Omega$ it follows that there exists $1\leq j_1\leq m$ such that

\[M=A_1\oplus (\bigoplus_{j\neq j_1}A'_j)\oplus B'\]

Now, again, using the fact that each $A_i$ has the finite exchange property and doing induction on $n$ it follows that there exists $I\subseteq \left\{1,...,m\right\}$ of cardinality $n$ such that

\[M=(\bigoplus_{i=1}^nA_i)\oplus (\bigoplus_{j\in\left\{1,...m\right\}\setminus I}A'_j)\oplus B'\]

Now, since $B$ has no non-trivial direct summands in $\Omega$, using now the fact that $A'_j$ also has the exchange property for every $j\in \left\{1,...,m\right\}\setminus I$, we see that $I=\left\{1,...,m\right\}$. Thus, $n=m$ and 

\[(\bigoplus_{i=1}^nA_i)\oplus B'=M=(\bigoplus_{j=1}^nA'_j)\oplus B'\]

We conclude from this, from the fact that direct sum decompositions on the left side of both sides of the equation are strongly indecomposable, and from the fact that $B$ and $B'$ are both isomorphic to $M/\bigoplus _{i=1}^nA_i$ that $M=A\oplus B$ and $M=A'\oplus B'$ are both decomposition-isomorphic.
\end{proof}

\begin{corollary}
Suppose $M$ is Krull-Schmidt artinian. Then there exists a direct sum decomposition $M=E\oplus F$ of $M$, unique up to decomposition-isomorphisms, such that $E$ has the finite exchange property and such that no non-trivial direct summand of $F$ has the finite exchange property. Moreover, for any $\Omega$ and $M=A\oplus B$ as in theorem 3.1, $A$ is direct summand of $E$.  
\end{corollary}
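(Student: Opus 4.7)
The plan is to apply Theorem 3.6 to the class $\Omega$ consisting of all modules with the finite exchange property, and then to derive the moreover clause from the monotonicity remark following Proposition 3.3 together with the uniqueness assertion of Theorem 3.6.

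First I would check that this $\Omega$ satisfies the three hypotheses of Theorem 3.6. Closure under finite direct sums and closure under direct summands are standard facts about the finite exchange property. The essential hypothesis is (3): that every indecomposable module in $\Omega$ be strongly indecomposable, i.e.\ have local endomorphism ring. This is a classical theorem of Warfield: an indecomposable module has the finite exchange property if and only if its endomorphism ring is local, which matches verbatim the notion of strong indecomposability recalled just before Theorem 3.6. With the hypotheses verified, Theorem 3.6 immediately yields the decomposition $M=E\oplus F$, unique up to decomposition-isomorphisms, with $E$ having the finite exchange property and $F$ carrying no non-trivial direct summand with the finite exchange property.

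For the moreover clause, let $\Omega'$ be any class satisfying the hypotheses of Theorem 3.6 and let $M=A\oplus B$ be an $\Omega'$-decomposition as in that theorem. The core step is to verify the inclusion $\mathcal{S}(M)\cap\Omega'\subseteq\mathcal{S}(M)\cap\Omega$. Any $N\in\mathcal{S}(M)\cap\Omega'$ is itself Krull-Schmidt artinian as a direct summand of $M$, hence admits a finite indecomposable direct sum decomposition $N=\bigoplus_{i=1}^{k}N_i$ by Proposition 3.1; each $N_i$ lies in $\Omega'$ by closure under direct summands, hence is strongly indecomposable by hypothesis (3) on $\Omega'$, hence has the finite exchange property; and finite direct sums of modules with the finite exchange property have the finite exchange property, so $N\in\Omega$. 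Once this inclusion is secured, the monotonicity remark following Proposition 3.3 applied to $\Omega_1=\Omega'$ and $\Omega_2=\Omega$ produces a decomposition $M=A'\oplus B'$ with $A'\in\Omega$, $B'$ having no non-trivial direct summand in $\Omega$, and $A$ a direct summand of $A'$. Uniqueness in Theorem 3.6 applied to $\Omega$ then forces $M=A'\oplus B'$ to be decomposition-isomorphic to $M=E\oplus F$, so $A$ appears as a direct summand of a module isomorphic to $E$, which is the moreover conclusion.

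The only non-routine ingredient is hypothesis (3), where I would invoke the classical characterization of indecomposable exchange modules as those with local endomorphism ring; everything else is a formal deployment of results already established in Section 3.
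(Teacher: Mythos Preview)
Your proof is correct and follows the same route as the paper, which simply records that the result follows from Theorem 3.6 together with the monotonicity remark after Proposition 3.3. You have merely unpacked those two ingredients: the verification of hypotheses (1)--(3) for the class of finite-exchange modules, and the explicit check of the inclusion $\mathcal{S}(M)\cap\Omega'\subseteq\mathcal{S}(M)\cap\Omega$ needed to invoke that remark, both of which the paper leaves implicit.
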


\begin{proof}
The result follows from theorem 3.6 and the remarks made after proposition 3.3.
\end{proof}

\noindent The class of all semisimple modules, the class of all modules with finite length, the class of all Fitting modules, the class of all injective modules, the class of all pure injective modules, and the class of all $\Sigma$-pure injective modules all satisfy the conditions of theorem 3.6. Moreover, if the ring $R$ is semiperfect, then the class of all projective modules also satisfies the conditions of theorem 3.6, and if moreover, $R$ is local, then the class of all free modules satisfies the conditions of theorem 3.6. The following corollary follows.

\begin{corollary}
Suppose $M$ is Krull-Schmidt artinian. Then:
\begin{enumerate}
\item There exists a direct sum decomposition $M=A\oplus B$ of $M$, unique up to decomposition-isomorphisms, such that $A$ is semisimple and such that $B$ has no non-trivial simple direct summands. 
\item There exists a direct sum decomposition $M=A\oplus B$ of $M$, unique up to decomposition-isomorphisms, such that $A$ has finite length and such that all non-trivial direct summands of $B$ have infinite length.
\item There exists a direct sum-decomposition $M=A\oplus B$ of $M$, unique up to decomposition-isomorphisms, such that $A$ is Fitting and such that no non-trivial direct summand of $B$ is Fitting.
\item There exists a direct sum decomposition $M=A\oplus B$ of $M$, unique up to decomposition-isomorphisms, such that $A$ is injective and such that $B$ has no non-trivial injective submodules.
\item There exists a direct sum decomposition $M=A\oplus B$ of $M$, unique up to decomposition-isomorphisms, such that $A$ is pure injective and such that $B$ has no non-trivial pure injective direct summands.
\item There exists a direct sum decomposition $M=A\oplus B$ of $M$, unique up to decomposition-isomorphisms, such that $A$ is $\Sigma$-pure injective and such that $B$ has no non-trivial $\Sigma$-pure injective direct summands.
\item If the ring $R$ is semiperfect then there exists a direct sum decomposition $M=A\oplus B$ of $M$, unique up to decomposition-isomorphisms such that $A$ is projective and such that $B$ has no non-trivial projective direct summands.
\item If the ring $R$ is local then there exists a direct sum decomposition $M=A\oplus B$ of $M$, unique up to decomposition-isomorphisms such that $A$ is free and such that $B$ has no non-trivial projective direct summands.
\end{enumerate}
\end{corollary}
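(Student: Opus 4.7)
The plan is to reduce each of the eight items to a direct application of Theorem 3.6 by verifying, for the corresponding class $\Omega$, the three hypotheses of that theorem: closure under finite direct sums, closure under direct summands, and the property that every indecomposable module in $\Omega$ is strongly indecomposable. In every case the first two closures are standard and well known (and hold also in items (7) and (8) under the hypotheses that $R$ is semiperfect and local respectively). So the substantive work lies in verifying the third hypothesis, which in each item reduces to a known structural fact about endomorphism rings of indecomposable modules in the class.

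For (1) an indecomposable semisimple module is simple, so Schur's lemma gives that $End(M)$ is a division ring, hence local. For (2) and (3) the classical Fitting lemma gives that every indecomposable module of finite length, and more generally every indecomposable Fitting module, has local endomorphism ring. For (4) it is a well known theorem that an indecomposable injective module has local endomorphism ring. For (5) and (6) the analogous structural result holds for indecomposable pure injective and $\Sigma$-pure injective modules, both of which have local endomorphism rings. For (7), over a semiperfect ring every indecomposable projective is isomorphic to $eR$ for a primitive idempotent $e$, and $End(eR)\cong eRe$ is local by semiperfectness. For (8), by Kaplansky's theorem every projective module over a local ring is free; the only indecomposable free module is then $R$ itself, and $End(R)\cong R$ is local, so the class of free modules satisfies the hypotheses of Theorem 3.6. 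Moreover, since projective equals free over a local ring, the condition that $B$ have no non-trivial projective direct summands coincides with the condition that $B$ have no non-trivial free direct summands, which is exactly what Theorem 3.6 produces for this class.

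With the three hypotheses of Theorem 3.6 verified in each case, the existence and uniqueness up to decomposition-isomorphism of the asserted decomposition $M=A\oplus B$ follows immediately from that theorem. The main obstacle is not logical but bibliographical: one must invoke the correct structural result on locality of endomorphism rings of indecomposable modules in each class, and in the pure injective and $\Sigma$-pure injective cases this is a classical but nontrivial fact; the remaining verifications are essentially immediate from the definitions and standard closure properties.
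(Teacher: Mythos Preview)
Your proposal is correct and follows exactly the paper's approach: the paper simply asserts (in the paragraph preceding the corollary) that each of the listed classes satisfies the three hypotheses of Theorem~3.6, and you have supplied the explicit verifications, particularly for the ``strongly indecomposable'' condition. One small notational slip: since the paper works with left modules, in item~(7) the indecomposable projective should be written as $Re$ rather than $eR$, but the substance of the argument is unaffected.
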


\begin{corollary}
For any module $M$ we have:
\begin{enumerate}
\item If $M$ is artinian, then there exists a direct sum decomposition $M=A\oplus B$ of $M$, unique up to decomposition-isomorphisms, such that $A$ is noetherian and such that $B$ has no non-trivial noetherian direct summands.
\item If $M$ is noetherian, then there exists a direct sum decomposition $M=A\oplus B$ of $M$, unique up to decomposition-isomorphisms, such that $A$ is artinian and such that $B$ has no non-trivial artinian direct summands.
\end{enumerate}
\end{corollary}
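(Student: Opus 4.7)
The plan is to deduce both parts directly from Corollary 3.8(2), which already provides, for a Krull-Schmidt artinian module, a unique (up to decomposition-isomorphism) decomposition separating a finite-length summand from a complement whose non-trivial direct summands all have infinite length. The bridge between that result and the present statement rests on the elementary observation that, inside an artinian module, a direct summand is noetherian if and only if it has finite length, and dually, inside a noetherian module, a direct summand is artinian if and only if it has finite length.

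For part (1), I would begin by recalling that any artinian module is Krull-Schmidt artinian, so Corollary 3.8(2) supplies a decomposition $M = A \oplus B$ with $A$ of finite length and no non-trivial direct summand of $B$ of finite length, unique up to decomposition-isomorphism. I would then argue $A$ is noetherian (being of finite length) and that $B$ has no non-trivial noetherian direct summand: indeed, any such summand $L$ would be a direct summand of the artinian module $M$, hence artinian, so noetherian plus artinian forces $L$ to have finite length, contradicting the defining property of $B$.

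The uniqueness part is obtained by showing that any candidate decomposition $M = A' \oplus B'$ with $A'$ noetherian and $B'$ having no non-trivial noetherian direct summand already falls under the scope of Corollary 3.8(2). Since $A'$ is a noetherian direct summand of the artinian $M$, it is of finite length; and any non-trivial finite-length direct summand of $B'$ would in particular be noetherian, contradicting the hypothesis on $B'$. Thus $M = A' \oplus B'$ satisfies the hypothesis of Corollary 3.8(2), and the uniqueness clause there gives the required decomposition-isomorphism with $M = A \oplus B$.

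Part (2) is verbatim the dual argument: a noetherian module is Krull-Schmidt artinian, one applies Corollary 3.8(2), and one uses that inside a noetherian module an artinian direct summand is the same thing as a finite-length direct summand. I do not anticipate any real obstacle; the only thing to be careful about is ensuring the two-directional translation between \emph{noetherian direct summand of $M$} and \emph{finite length direct summand of $M$} (respectively \emph{artinian direct summand of $M$} and \emph{finite length direct summand of $M$}) is applied consistently when transferring both the existence and the uniqueness clauses.
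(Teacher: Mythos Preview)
Your proposal is correct and follows essentially the same approach as the paper's own proof, which likewise invokes Corollary~3.8(2), the fact that artinian and noetherian modules are Krull-Schmidt artinian, and the characterization of finite-length modules as those that are both artinian and noetherian. You have simply spelled out in more detail the two-way translation (for both existence and uniqueness) that the paper compresses into a single sentence.
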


\begin{proof}
The result follows from corollary 3.8, and from the fact that both artinian and noetherian modules are Krull-Schmidt artinian, together with the fact that modules with finite length are precisely those modules which are artinian and noetherian.
\end{proof}

\section{Modules with Deviation on Direct Summands}

In this section we study modules with deviation on direct summands. We characterize modules satisfying this condition and we present a brief study of their direct sum decompositions.

Recall that the deviation of a poset $A$, $devA$, is defined as follows: $devA$ is equal to $-1$ if $A$ is a discrete poset (i.e. if the relation $a\leq b$ in $A$ implies that $a=b$), and $devA$ is equal to an ordinal $\alpha$, if $A$ does not have deviation $\beta$, for any $\beta<\alpha$, and if for every descending chain $a_1\geq a_2\geq ...$ in $A$, the interval $[a_n,a_{n+1}]$, as a subposet of $A$, has deviation $<\alpha$ for all but finitely many $n$. Thus a poset $A$ satisfies the descending chain condition if and only if $devA\leq 0$. A module $M$ is thus Krull-Shmidt artinian if and only if $\mathcal{S}(M)$ has deviation $\leq 0$. Note that in this case the dual poset of $\mathcal{S}(M)$, $\mathcal{S}(M)^*$, also has deviation $\leq 0$. The following proposition generalizes this.

\begin{proposition}
Let $M$ be a module. Suppose $\mathcal{S}(M)$ has deviation. Then
\[dev\mathcal{S}(M)=dev\mathcal{S}(M)^*\]
\end{proposition}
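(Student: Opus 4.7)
The plan is to prove the equality by transfinite induction on the ordinal $\alpha$, establishing simultaneously both inequalities by verifying the biconditional $dev\,\mathcal{S}(M)\leq\alpha \Leftrightarrow dev\,\mathcal{S}(M)^*\leq\alpha$ for every module $M$ with $\mathcal{S}(M)$ having deviation. The base case $\alpha=-1$ reduces to the observation that $\mathcal{S}(M)$ is discrete only when $M=0$, in which case $\mathcal{S}(M)=\mathcal{S}(M)^*$ is a single point.

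Before the induction proper, I would establish two preparatory facts. First, a structural description of intervals: whenever $N_1\leq N_2$ are direct summands of $M$ with $N_2=N_1\oplus L$, the interval $[N_1,N_2]_{\mathcal{S}(M)}$ is order-isomorphic to $\mathcal{S}(L)$ via $K\mapsto K\cap L$, with inverse $L_0\mapsto N_1\oplus L_0$. The verification is a modular-law argument together with the standard fact that the direct summands of $M$ contained in a direct summand $N$ are exactly the direct summands of $N$. Second, a complement-swapping construction: given any ascending chain $N_1\leq N_2\leq\ldots$ of direct summands of $M$, one can inductively choose complements $N_n'$ of $N_n$ in $M$ forming a descending chain $N_1'\geq N_2'\geq\ldots$, by setting $L_n^{*}=N_{n+1}\cap N_n'$ and taking $N_{n+1}'$ to be a complement of $L_n^{*}$ inside $N_n'$; the analogous construction works dually for descending chains. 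In both situations the first fact yields the interval isomorphism $[N_n,N_{n+1}]_{\mathcal{S}(M)}\cong\mathcal{S}(L_n^{*})\cong[N_{n+1}',N_n']_{\mathcal{S}(M)}$.

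With these tools in hand, the inductive step goes as follows. Assume the biconditional for all $\beta<\alpha$, and suppose $dev\,\mathcal{S}(M)\leq\alpha$. For an arbitrary descending chain in $\mathcal{S}(M)^*$, that is, an ascending chain $N_1\leq N_2\leq\ldots$ in $\mathcal{S}(M)$, produce the descending complement chain $N_1'\geq N_2'\geq\ldots$. The hypothesis forces $dev[N_{n+1}',N_n']_{\mathcal{S}(M)}<\alpha$ for all but finitely many $n$, hence $dev\,\mathcal{S}(L_n^{*})<\alpha$ eventually. Applying the inductive hypothesis to each such $L_n^{*}$ yields $dev\,\mathcal{S}(L_n^{*})^*<\alpha$, which via the interval isomorphism translates into $dev[N_n,N_{n+1}]_{\mathcal{S}(M)^*}<\alpha$ for all but finitely many $n$, giving $dev\,\mathcal{S}(M)^*\leq\alpha$. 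The reverse implication is proved symmetrically, starting from a descending chain in $\mathcal{S}(M)$ and swapping to an ascending chain of complements.

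The main obstacle I expect is the complement-swapping construction: while the interval description is a routine modular-law calculation, producing a descending (respectively ascending) chain of complements with the exact compatibility $N_n'=L_n^{*}\oplus N_{n+1}'$ requires showing at each step that $N_{n+1}\cap N_n'$ is a direct summand of $N_n'$. Once that is in place, the transfinite induction is essentially bookkeeping; the biconditional formulation of the induction hypothesis is what allows the deviation bound on $\mathcal{S}(L_n^{*})$ to be transferred to its dual.
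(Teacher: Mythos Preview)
Your proposal is correct and follows essentially the same route as the paper: transfinite induction on $\alpha$, the complement-swapping construction turning an ascending chain into a descending chain of complements (with the modular-law step $M_{n+1}=M_n\oplus(M_{n+1}\cap N_n)$ exactly as you anticipate), and the interval identification to transfer the deviation bound via the induction hypothesis. The only cosmetic difference is that the paper phrases the interval isomorphism in terms of the quotient $\mathcal{S}(N/L)$ rather than a chosen complement $\mathcal{S}(L)$, which is immaterial since $L\cong N/L$ when $N=L'\oplus L$.
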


\begin{proof}
Suppose $\mathcal{S}(M)$ has deviation $\alpha$. We prove, by transfinite induction on $\alpha$, that $dev\mathcal{S}(M)^*\leq\alpha$. The case in which $\alpha=-1$ is trivial. Suppose now that this is true for all $\beta<\alpha$. Let $M_1\leq M_2\leq...$ be an ascending chain in $\mathcal{S}(M)$ (i.e. a decending chain in $\mathcal{S}(M)^*$). We construct, recursively, a descending chain $N_1\geq N_2\geq ...$ in $\mathcal{S}(M)$ such that $M=M_n\oplus N_n$ for all $n$. Let $N_1$ be such that $M=M_1\oplus N_1$. Suppose now that for some $n$, we have constructed a chain $N_1\geq ...N_n$ in $\mathcal{S}(M)$ such that $M=M_i\oplus N_i$ for all $1\leq i\leq n$. It is clear, by the modular law, that $M_{n+1}=M_n\oplus(M_{n+1}\cap N_n)$. Thus $M_{n+1}\cap N_n$ is a direct summand of $M$. It follows that $M_{n+1}\cap N_n$ is a direct summand of $N_n$. Let $N_{n+1}$ be such that 
\[N_n=(M_{n+1}\cap N_n)\oplus N_{n+1}\]
It is clear that $N_{n+1}$ thus defined, satisfies the equation $M=M_{n+1}\oplus N_{n+1}$.

By the way the chain $N_1\geq N_2\geq...$ was constructed, it is immediate that all three modules 
\[M_{n+1}/M_n, M_{n+1}\cap N_n,N_n/N_{n+1}\]
are  isomorphic. It follows that $\mathcal{S}(M_{n+1}/M_n)$ and $\mathcal{S}(N_n/N_{n+1})$ are isomorphic posets for all $n\in \mathbb{N}$. Now, since $dev\mathcal{S}(M)=\alpha$, then $dev\mathcal{S}(N_n/N_{n+1})<\alpha$ for all but finitely many $n$, from which it follows that $dev\mathcal{S}(M_{n+1}/M_n)<\alpha$ for all but finitely many $n$. Thus, from the induction hypothesis, we have that $dev\mathcal{S}(M_{n+1}/M_n)^*<\alpha$ for all but finitely $n$. From this, from the fact that for any two direct summands $L$ and $N$ of $M$ such that $L\leq N$, the interval $[L,N]$ is isomorphic to the poset $\mathcal{S}(N/L)$, and from the fact that the interval $[N,L]$ in $\mathcal{S}(M)^*$ is isomorphic to the poset $\mathcal{S}(N/L)^*$, it follows that $dev\mathcal{S}(M)^*\leq \alpha$.
One proves analogously that $dev\mathcal{S}(M)\leq dev\mathcal{S}(M)^*$. This completes the proof. 
\end{proof}

\noindent Arguing as in proposition 2.1 we have the following proposition.

\begin{proposition}
Let $M$ be a module. Let $E$ denote the ring of endomorphisms, $End(M)$, of $M$. The following conditions are equivalent:
\begin{enumerate}
\item $\mathcal{S}(M)$ has deviation.
\item $\mathcal{S}(_EE)$ has deviation.
\item $\mathcal{S}(E_E)$ has deviation.
\item The poset of all idempotents of $E$ has deviation.
\item The dual of the poset of all idempotents of $E$ has deviation.
\end{enumerate}
Moreover, if $M$ satisfies the conditions above, then 
\[dev\mathcal{S}(M)=dev\mathcal{S}(_EE)=dev{S}(E_E)\]
And this ordinal is equal to the deviation of the poset of idempotents of $E$ and to the deviation of the dual of the poset of idempotents of $E$.
\end{proposition}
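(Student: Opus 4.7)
The plan is to mimic the proof of Proposition 2.1 to set up a chain of order-isomorphic posets, and then invoke Proposition 4.1 to handle the dual.

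First, I would record the basic observation from the proof of Proposition 2.1 in sharper form: for any ring $S$ and any $S$-module $N$ with $End_S(N) = E$, the assignment sending an idempotent $e \in E$ to the direct summand $e(N)$ defines an order-isomorphism between the poset of idempotents of $E$ and the poset $\mathcal{S}(N)$. Applying this to $N = M$ (as an $R$-module) gives an order-isomorphism between the idempotents of $E$ and $\mathcal{S}(M)$. To apply it to $_EE$ and $E_E$, note that $End({}_EE) \cong E^{op}$ via right multiplication and $End(E_E) \cong E$ via left multiplication; moreover the condition $e \leq e'$, i.e.\ $ee' = e = e'e$, is symmetric in $e$ and $e'$ and hence is the same condition in $E$ and $E^{op}$. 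So the poset of idempotents of $E$ coincides, as an ordered set, with the poset of idempotents of $E^{op}$, and the observation above gives order-isomorphisms
\[
\mathcal{S}(M) \;\cong\; \{\text{idempotents of }E\} \;\cong\; \mathcal{S}({}_EE) \;\cong\; \mathcal{S}(E_E).
\]

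Since deviation is an invariant of the order structure of a poset, this chain of order-isomorphisms immediately yields the equivalence of conditions 1, 2, 3, 4, and in each case where deviation exists it produces a common value
\[
dev\,\mathcal{S}(M) \;=\; dev\,\mathcal{S}({}_EE) \;=\; dev\,\mathcal{S}(E_E),
\]
equal to the deviation of the poset of idempotents of $E$. The equivalence of 4 and 5, and the fact that the deviation of the poset of idempotents of $E$ equals that of its dual, then follow directly from Proposition 4.1 applied to $M$ together with the order-isomorphism between $\mathcal{S}(M)$ and the idempotents of $E$ (which dualizes to an order-isomorphism between $\mathcal{S}(M)^*$ and the dual of the poset of idempotents).

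I do not anticipate a real obstacle: the only subtlety is the bookkeeping involved in verifying that the partial order on idempotents is invariant under passing to the opposite ring, so that the three direct summand posets $\mathcal{S}(M)$, $\mathcal{S}({}_EE)$, $\mathcal{S}(E_E)$ really are identified by a single common intermediary. Once this is in place, the result reduces to Proposition 2.1 plus Proposition 4.1 and no further work is required.
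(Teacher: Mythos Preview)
Your overall plan matches the paper exactly: the paper's entire proof of this proposition is the single sentence ``Arguing as in proposition 2.1 we have the following proposition,'' so reducing everything to the Proposition~2.1 correspondence and then invoking Proposition~4.1 for the self-dual part is precisely what is intended.

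There is, however, a genuine gap in your sharpened version of the observation from Proposition~2.1. The assignment $e \mapsto e(N)$ is \emph{not} an order-isomorphism from the poset of idempotents of $E$ onto $\mathcal{S}(N)$: it is surjective and order-preserving, but not injective. For instance, with $N = R \oplus R$ the matrices
\[
e=\begin{pmatrix}1&0\\0&0\end{pmatrix},\qquad e'=\begin{pmatrix}1&1\\0&0\end{pmatrix}
\]
are distinct, incomparable idempotents with $e(N)=e'(N)$. So the idempotent poset and $\mathcal{S}(N)$ are in general non-isomorphic, and deviation cannot simply be transported along an isomorphism.

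What does survive is this. The map $e \mapsto e(N)$ is \emph{strictly} order-preserving (if $e\leq e'$ and $e(N)=e'(N)$ then $e=e'$), and every chain in $\mathcal{S}(N)$ lifts to a chain of idempotents by choosing compatible complements as in the proof of Proposition~4.1. Moreover, for $e\leq e'$ the interval $[e,e']$ in the idempotent poset is isomorphic, via $f\mapsto f-e$, to the idempotent poset of $(e'-e)E(e'-e)\cong End\big(e'(N)/e(N)\big)$, while the interval $[e(N),e'(N)]$ in $\mathcal{S}(N)$ is isomorphic to $\mathcal{S}\big(e'(N)/e(N)\big)$. These two facts allow a transfinite induction on the deviation, matching intervals at each stage, which shows that the idempotent poset and $\mathcal{S}(N)$ have the same deviation whenever either has one. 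With this repair the rest of your argument---handling $_EE$ and $E_E$ via the observation that the idempotent order is the same in $E$ and $E^{op}$, and handling the dual via Proposition~4.1---goes through unchanged.
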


\noindent Thus the condition of a module $M$ admitting deviation on direct summands can be put completely in terms of the ring of endomorphisms, $End(M)$, of $M$. The following is an example of a non-Krull-Schmidt artinian ring with deviation on direct summands: Let $D$ be an integral domain. Let $E$ be the subring $D+D^{(\mathbb{N})}$ of $D^{\mathbb{N}}$, that is, $E$ is the ring of all infinite sequences of elements of $D$, which are constant almost everywhere. The set of all idempotents of $E$ is equal to the set of all sequences of $0$'s and $1$'s which are constant everywhere. It is easily seen that this set has deviation equal to 1. Thus $E$ is a non-Krull-Schmidt artinian ring with deviation on direct summands.

We end this section with a brief study of direct sum decompositions of modules admitting deviation on direct summands. Observe, from the proof of proposition 3.1, that all direct sum decompositions of a module $M$ refine into finite indecomposable direct sum decompositions of $M$ if and only if all direct sum decompositions of $M$ are finite and all direct summands of $M$ admit finite indecomposable direct sum decompositions. The next result says that modules admiting deviation on direct summands satisfy a condition very close to this.

\begin{proposition}
Let $M$ be a module. Suppose $\mathcal{S}(M)$ has deviation. Then all direct sum decompositions of $M$ are finite and every direct summand of $M$ has a non-trivial indecomposable direct summand.
\end{proposition}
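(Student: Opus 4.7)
The proposition asserts two separate things about a module $M$ whose poset $\mathcal{S}(M)$ has deviation: that every direct sum decomposition of $M$ is finite, and that every nonzero direct summand of $M$ admits an indecomposable direct summand. I would treat the two claims independently.

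For the finiteness claim, I would argue by contradiction: suppose $M=\bigoplus_{\alpha\in A}M_\alpha$ with $A$ infinite and each $M_\alpha\neq 0$. For every $S\subseteq A$ the submodule $\bigoplus_{\alpha\in S}M_\alpha$ is a direct summand of $M$, with complement $\bigoplus_{\alpha\notin S}M_\alpha$, so the assignment $S\mapsto\bigoplus_{\alpha\in S}M_\alpha$ is an order embedding $\mathcal{P}(A)\hookrightarrow\mathcal{S}(M)$. Restricting to a countably infinite subfamily $\{M_n\}_{n\in\mathbb{N}}$ and fixing an enumeration $\mathbb{Q}=\{q_n\}_{n\in\mathbb{N}}$, the map $r\mapsto\bigoplus_{q_n<r}M_n$ embeds $\mathbb{Q}$ into $\mathcal{S}(M)$ as a densely ordered chain. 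A standard transfinite induction shows that $\mathbb{Q}$ has no deviation, because every closed interval inside $\mathbb{Q}$ is again a countable densely ordered chain and so any candidate ordinal for its deviation recurses into itself. Since deviation is inherited by subposets, this contradicts the hypothesis on $\mathcal{S}(M)$.

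For the indecomposable summand claim, I would prove the stronger statement that every nonzero module $N$ whose poset of direct summands has deviation admits an indecomposable direct summand, by transfinite induction on $\alpha=dev\,\mathcal{S}(N)$. The base case $\alpha=0$ is exactly the Krull-Schmidt artinian case, which admits a finite indecomposable direct sum decomposition. For the inductive step $\alpha>0$, assume for contradiction that $N$ has no indecomposable direct summand. Then every nonzero direct summand of $N$ is decomposable and, because a direct summand of a direct summand is a direct summand, also has no indecomposable direct summand. Recursively choose decompositions $N_n=N_{n+1}\oplus P_{n+1}$ with $N_{n+1},P_{n+1}\neq 0$, starting from $N_0=N$. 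The resulting strictly descending chain $N_0>N_1>\cdots$ in $\mathcal{S}(N)$ satisfies $[N_{n+1},N_n]\cong\mathcal{S}(N_n/N_{n+1})\cong\mathcal{S}(P_{n+1})$, via the identification already used in the proof of Proposition~4.1. By the definition of deviation there exists $n$ with $dev\,\mathcal{S}(P_{n+1})<\alpha$, and $P_{n+1}$ is a nonzero direct summand of $N$ with no indecomposable direct summand, contradicting the induction hypothesis applied to $P_{n+1}$.

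The main obstacle is arranging the descending chain so that the inductive step closes: one needs $P_{n+1}$ both to inherit the absence of an indecomposable direct summand and to sit inside an interval whose deviation is forced to drop strictly below $\alpha$ for some $n$. Hereditariness comes for free from transitivity of direct summands, and the drop in deviation is delivered by the very definition of deviation together with the identification $[N_{n+1},N_n]\cong\mathcal{S}(P_{n+1})$; the rest is bookkeeping.
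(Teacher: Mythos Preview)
Your argument is correct and matches the paper's approach in both parts: embed a dense linear order into $\mathcal{S}(M)$ to contradict the existence of deviation, then induct on the deviation using a descending chain to locate a summand of strictly smaller deviation and apply the inductive hypothesis there. The only cosmetic difference is that the paper reindexes the infinite decomposition by $\mathbb{Q}$ and embeds $\mathbb{R}$ via $x\mapsto\bigoplus_{\alpha\in\mathbb{Q},\,\alpha\leq x}M_\alpha$, citing an outside reference for the fact that $\mathbb{R}$ has no deviation, whereas you embed $\mathbb{Q}$ and supply the self-similarity argument yourself.
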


\begin{proof}
Suppose that $M$ admits infinite direct sum decompositions. Let $M=\bigoplus_{\alpha\in A}M_\alpha$ be an infinite direct sum decomposition of $M$. Suppose, without any loss of generality that $A=\mathbb{Q}$. Then the function 
\[x\mapsto \bigoplus_{\alpha\in\mathbb{Q}:\alpha\leq x}M_\alpha\]
defines a strictly increasing function from $\mathbb{R}$ to $\mathcal{S}(M)$. Thus, in this case $\mathcal{S}(M)$ has subposets order-isomorphic to $\mathbb{R}$. It follows, from the fact that the class of all posets with deviation is closed under subposets, and from the fact that $\mathbb{R}$ does not admit deviation [5, ex. 7.4] that $\mathcal{S}(M)$ does not have deviation. This proves the first statement of the proposition.

Suppose now that $\mathcal{S}(M)$ has deviation $\alpha$. We do induction on $\alpha$ to prove that every non-trivial direct summand of $M$ has non-trivial indecomposable direct summands. The case $\alpha\leq 0$ follows from proposition 3.1. Suppose now that the claim is true for all $\beta<\alpha$. Let $M_1$ be a non-trivial direct summand of $M$. Suppose there exists a striclty descending chain $M_1>M_2>...$ in $\mathcal{S}(M)$. Let $n$ be such that $dev\mathcal{S}(M_n/M_{n+1})<\alpha$. Since $M_n/M_{n+1}$ is a non-trivial direct summand of $M$, then, by the induction hypothesis, it follows that $M_n/M_{n+1}$ has non-trivial indecomposable direct summands. From this and from the fact that $M_n/M_{n+1}$ is direct summand of $M_1$ it follows that $M_1$ admits non-trivial indecomposable direct sumands. This concludes the proof.  
\end{proof}

\noindent Finally, observe that if a module $M$ is not Krull-Schmidt artinian, then there exists a chain $M_1<M_2<...$ in $\mathcal{S}(M)$ such that, for each $n$, $M_n$ admits direct sum decompositions of cardinality $n$. The following proposition says that, if we further assume that the poset $\mathcal{S}(M)$ admits deviation, then we may take $M_n$ to admit indecomposable direct sum decompositions of cardinality $n$ for each $n$. Note that this implies in turn that every non-Krull-Schmidt artinian module admitting deviation on direct sumands admits infinite independent sets of direct summands each of which is indecomposable.  

\begin{proposition}
Let $M$ be a module. Suppose $\mathcal{S}(M)$ has deviation. If $M$ is not Krull-Schmidt artinian, then there exists a chain $M_1<M_2<...$ in $\mathcal{S}(M)$ such that, for each $n\geq 1$, $M_n$ admits indecomposable direct sum decompositions of cardinality $n$.
\end{proposition}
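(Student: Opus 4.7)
The plan is to combine the infinite ascending chain of direct summands that $M$ must admit with Proposition 4.3, peeling off one indecomposable direct summand at each stage. By Proposition 2.1, $M$ failing to be Krull-Schmidt artinian is equivalent to the ACC on direct summands failing, so I can fix a strictly ascending chain $A_1<A_2<\cdots$ in $\mathcal{S}(M)$.

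The next step is to run the complement construction that appears in the proof of Proposition 4.1: setting $A_0=0$ and $N_0=M$, one builds recursively a descending chain $N_0\geq N_1\geq N_2\geq\cdots$ in $\mathcal{S}(M)$ such that $M=A_n\oplus N_n$ for every $n$. Writing $P_n=A_n\cap N_{n-1}$, the modular law gives $A_n=A_{n-1}\oplus P_n$, so each $P_n$ is a direct summand of $M$ and is non-trivial because the chain $A_1<A_2<\cdots$ is strict. Iterating, $A_n=P_1\oplus P_2\oplus\cdots\oplus P_n$.

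Since $\mathcal{S}(M)$ has deviation, Proposition 4.3 guarantees that every non-trivial direct summand of $M$ contains a non-trivial indecomposable direct summand. Applying this to each $P_n$, I can write $P_n=I_n\oplus Q_n$ with $I_n$ non-trivial and indecomposable. Substituting back,
\[A_n=(I_1\oplus I_2\oplus\cdots\oplus I_n)\oplus(Q_1\oplus Q_2\oplus\cdots\oplus Q_n),\]
so $M_n:=I_1\oplus I_2\oplus\cdots\oplus I_n$ is a direct summand of $A_n$, hence of $M$. By construction $M_n$ admits an indecomposable direct sum decomposition of cardinality $n$, and $M_n<M_{n+1}$ strictly because $I_{n+1}\neq 0$, which yields the required chain.

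The delicate point is making sure that the successive indecomposables $I_n$ assemble into a genuine internal direct sum inside $M$, rather than being an unrelated sequence of indecomposable direct summands with unpredictable intersections. This is exactly why one extracts $I_n$ from the complementary piece $P_n$ and not from $A_n$ itself, and it is where the Proposition 4.1 construction does the real work; after that, everything is routine bookkeeping.
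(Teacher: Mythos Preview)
Your argument is correct and is genuinely different from the paper's. The paper proceeds by transfinite induction on $\alpha=\operatorname{dev}\mathcal{S}(M)$: in the base case $\alpha=1$ it passes to an ascending chain, locates a tail in which every successive quotient $N_{k+i+1}/N_{k+i}$ is Krull--Schmidt artinian, picks an indecomposable $L_i$ out of each, and sets $M_n=\bigoplus_{i\le n}L_i$; in the inductive step it finds an interval $N_k/N_{k+1}$ with $0<\operatorname{dev}\mathcal{S}(N_k/N_{k+1})<\alpha$ and invokes the induction hypothesis inside that summand. You instead bypass the induction entirely by invoking Proposition~4.3 once per slice $P_n$, having already manufactured the independent family $(P_n)$ via the complement construction of Proposition~4.1. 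Your route is shorter and conceptually cleaner, because the transfinite induction has already been carried out in the proof of Proposition~4.3 and there is no need to repeat it; the paper's proof essentially reproves a special case of 4.3 inside its base step. The only thing the paper's approach buys is that it does not formally cite 4.3, but since both proofs rely on 4.1 anyway, your use of the stronger available tool is the more efficient choice.
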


\begin{proof}
Suppose $\mathcal{S}(M)$ has deviation $\alpha$. We do induction on $\alpha$. Suppose $M$ is not Krull-Schmidt artinian. Suppose then first that $\alpha=1$. Then, by proposition 4.1, the dual poset of $\mathcal{S}(M)$, $\mathcal{S}(M)^*$, also has deviation 1. Let $N_1<N_2<...$ be a strictly ascending chain in $\mathcal{S}(M)$. Let $k$ be such that $N_{k+i+1}/N_{k+i}$ is Krull-Schmidt artinian for all $i\geq 1$. Then $N_{k+i+1}/N_{k+i}$ admits indecomposable direct sum decompositions for all $i\geq 1$. Thus, if for each $i$, $L_i$ is an indecomposable direct summand of $N_{k+i+1}/N_{k+i}$ and for each $n$ we make $M_n=\bigoplus_{i=1}^nL_i$, then the chain $M_1<M_2<...$ satisfies the conditions of the proposition. This establishes the base of the induction.

Suppose now that $\alpha>1$ and that the result has be proven for all $\beta<\alpha$. Suppose $N_1>N_2>...$ is a strictly descending chain in $\mathcal{S}(M)$ such that there exists $k$ such that 
\[0<dev\mathcal{S}(N_k/N_{k+1})<\alpha\]
Then, $N_k/N_{k+1}$ is a direct summand of $M$, such that, by the induction hypothesis, there exists a chain $M_1<M_2<...$ in $\mathcal{S}(N)$ such that for each $n$, $M_n$ admits direct sum decompositions of cardinality $n$. This concludes the proof. 
\end{proof}

\section{Modules with Finite Krull-Schmidt Length}

\noindent We define the Krull-Schmidt length of a module $M$, $KS\ell(M)$, as the supremum of all cardinalities of direct sum decompositions of $M$. In this section we characterize modules with finite Krull-Schmidt length and we present a brief study on their direct sum decompositions.

Modules with finite Krull-Schmidt length are easily seen to be Krull-Schmidt artinian. Moreover, all examples of Krull-Schmidt artinian modules presented in section 2 also have finite Krull-Schmidt length. [7] provides an example of a ring $E$ such that the left $E$-module $_EE$ admits indecomposable direct sum decompositions of cardinality $n$ for all $n\geq 1$, but such that $E$ does not admit infinite sets of orthogonal idempotents. The module $_EE$ thus defined is an example of a Krull-Schmidt artinian module with infinite Krull-Schmidt length. We will regard modules with finite Krull-Schmidt length as those Krull-Schmidt artinian modules for which a rudimentary measure of complexity on direct sum decompositions, their Krull-Schmidt length, can be associated.

Observe that the class of all modules with finite Krull-Schmidt length is closed under direct summands. Observe also that by the arguments presented after corollary 3.2, this class is not closed under finite direct sums.

\begin{theorem}
Let $M$ be a module. The following conditions are equivalent:
\begin{enumerate}
\item $M$ has finite Krull-Schmidt length.
\item There exists $n\geq 1$ such that every independent subset of $\mathcal{S}(M)$ has cardinality at most $n$.
\item There exists $n\geq 1$ such that every strictly ascending chain of elements of $\mathcal{S}(M)$ has length at most $n$. 
\item There exists $n\geq 1$ such that every strictly descending chain of elements of $\mathcal{S}(M)$ has length at most $n$.
\item There exist strictly increasing functions from $\mathcal{S}(M)$ to $\mathbb{N}$.
\end{enumerate}
Moreover, if $M$ satisfies the conditions above, then $KS\ell(M)$ is the maximum of cardinalities of independent subsets of $\mathcal{S}(M)$, the maximum of lengths of strictly ascending chains in $\mathcal{S}(M)$, the maximum of lengths of strictly descending chains in $\mathcal{S}(M)$, and if we denote by $\Theta$ the set of striclty increasing functions from $\mathcal{S}(M)$ to $M$, ordered in terms of their images (i.e. $\alpha\leq\beta$ if $\alpha(N)\leq\beta(N)$ for all $N\in \mathcal{S}(M)$, with $\alpha,\beta\in\Theta$) then $\Theta$ has a minimum and it is equal to the function $KS\ell$ that associates to each $N\in\mathcal{S}(M)$ its Krull-Schmidt length $KS\ell(N)$.
\end{theorem}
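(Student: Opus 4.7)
The plan is to establish the cycle (1) $\Rightarrow$ (2) $\Leftrightarrow$ (3) $\Leftrightarrow$ (4), together with (1) $\Leftrightarrow$ (5), and to track cardinalities throughout so that the ``moreover'' statement falls out automatically. All the correspondences rest on three essentially equivalent pieces of data attached to $M$: (a) direct sum decompositions of $M$ into nonzero summands, (b) independent families of nonzero direct summands of $M$, and (c) strictly ascending chains in $\mathcal{S}(M)$ from $0$ to $M$.

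For (1) $\Leftrightarrow$ (2) I would observe that a direct sum decomposition $M = \bigoplus_{i=1}^k M_i$ into nonzero summands is itself an independent family of size $k$, and conversely any independent family $\{N_1,\ldots,N_k\}$ of nonzero direct summands can be completed to a decomposition $M = N_1 \oplus \cdots \oplus N_k \oplus N'$, so the suprema of the relevant cardinalities agree. For (2) $\Leftrightarrow$ (3) and (2) $\Leftrightarrow$ (4) I would pass between chains and independent families as follows. Given a strictly ascending chain $N_0 < N_1 < \cdots < N_m$ in $\mathcal{S}(M)$, each $N_i$ is a direct summand of $N_{i+1}$ by the modular-law argument used in the proof of Proposition 4.1, so $N_{i+1} = N_i \oplus L_i$ for some nonzero direct summand $L_i$ of $M$, and $\{L_0,\ldots,L_{m-1}\}$ is independent of size $m$. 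Conversely, the partial sums of an independent family form a strictly ascending chain. Descending chains are handled symmetrically by splitting $N_i = N_{i+1} \oplus L_i$, which also gives the ascending/descending symmetry.

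For (5) $\Rightarrow$ (1), a strictly increasing $f: \mathcal{S}(M) \to \mathbb{N}$ converts any decomposition $M = M_1 \oplus \cdots \oplus M_k$ with $M_i \neq 0$ into the strictly increasing sequence $f(0) < f(M_1) < f(M_1 \oplus M_2) < \cdots < f(M)$ in $\mathbb{N}$, forcing $k \leq f(M)$. For (1) $\Rightarrow$ (5), the assignment $N \mapsto KS\ell(N)$ is well-defined with values in $\mathbb{N}$, since every direct summand of $M$ has KS length at most $KS\ell(M)$, and it is strictly increasing because $N < N'$ yields $N' = N \oplus L$ with $L \neq 0$, hence $KS\ell(N') \geq KS\ell(N)+1$; thus $KS\ell \in \Theta$.

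The ``moreover'' statement is then read off from the cardinality-preserving nature of the correspondences above: the maximum independent family, the maximum strictly ascending chain, and the maximum strictly descending chain in $\mathcal{S}(M)$ each realize exactly $KS\ell(M)$. Finally, applying the (5) $\Rightarrow$ (1) estimate to an arbitrary $N \in \mathcal{S}(M)$ in place of $M$ gives $KS\ell(N) \leq f(N)$ for every $f \in \Theta$, and since $KS\ell$ itself lies in $\Theta$ we conclude $KS\ell = \min \Theta$. I expect the main obstacle to be bookkeeping: keeping clean track of the off-by-one discrepancies between chain length (counted as the number of strict inequalities), independent family size, and decomposition size, so that the four quantities in the ``moreover'' part coincide exactly rather than differing by one. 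This is resolved by always extending chains to start at $0$ and end at $M$ via the complement construction above, which forces every such maximal chain of length $m$ to come from a decomposition of $M$ of size $m$.
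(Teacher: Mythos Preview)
Your proposal is correct and follows essentially the same route as the paper: set up the dictionary between decompositions, independent families, and chains in $\mathcal{S}(M)$, and handle (5) via the map $N\mapsto KS\ell(N)$. Two minor differences are worth noting. First, the paper declares $1\Leftrightarrow 2$ and $3\Leftrightarrow 4$ ``trivial'' and proves $1\Leftrightarrow 4$ directly, whereas you route everything through (2) using the modular-law splitting (this is the same argument, just factored differently). Second, for $(5)\Rightarrow(1)$ the paper argues by induction on $\eta(M)$, while your counting argument---reading off $k\le f(M)$ from the chain $f(0)<f(M_1)<\cdots<f(M)$---is more direct and yields the minimality of $KS\ell$ in $\Theta$ with no extra work; this is a genuine, if small, streamlining. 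Your off-by-one caution is well placed: the paper uses ``length $m$'' for a chain with $m$ terms, so be sure your conventions match when you write it up.
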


\begin{proof}
The equivalences $1\Leftrightarrow 2$ and $3\Leftrightarrow4$ are trivial.

$1\Leftrightarrow4$: Suppose first that $M$ has finite  Krull-Schmidt length. Let $M_1>...>M_m$ be a strictly descending chain in $\mathcal{S}(M)$. Suppose, without any loss of generality, that $M_1=M$. If, for every $1\leq i\leq m-1$, $M'_i$ is such that $M_i =M'_i \oplus M_{i+1}$, and we let $M'_m$ be equal to $M_m$, then $M=\bigoplus_{i=1}^mM'_i$ is a direct sum decomposition of $M$, of cardinality $m$. It follows that $m\leq KS\ell(M)$. Suppose now that every strictly descending chain in $\mathcal{S}(M)$ has length at most $n$. We prove that $KS\ell(M)\leq n $. Let $M=\bigoplus_{i=1}^mM_i$ be a direct sum decomposition of $M$ such that $M_i\neq\left\{0\right\}$ for all $i$. If we let $M'_j=\bigoplus_{i=1}^{n-j}M_i$ for all $1\leq j\leq m$, then $M'_1>...>M'_m$ is a strictly descending chain in $\mathcal{S}(M)$, of length $m$. It follows that $m\leq n$. We conclude that $KS\ell(M)\leq n$.

$1\Leftrightarrow5$: Suppose $M$ has finite Krull-Schmidt length. Then the function $KS\ell :\mathcal{S}(M)\rightarrow \mathbb{N}$ that associates to each $N\in\mathcal{S}(M)$ its Krull-Schmidt length $KS\ell(N)$, is well defined. The function $KS\ell$ thus defined is strictly incresing. Suppose now that there exist strictly increasing functions from $\mathcal{S}(M)$ to $\mathbb{N}$. Let $\eta:\mathcal{S}(M)\rightarrow\mathbb{N}$ be a strictly increasing function. We make induction on $\eta(M)$ to prove that every direct sum decompositions of $M$ has cardinality at most $\eta(M)$. The case in which $\eta(M)=1$ is trivial. Suppose now that the result is true for all $k$ such that $1\leq k\leq \eta(M)-1$. Let $M=\bigoplus_{i=1}^mM_i$ be a direct sum decomposition of $M$. Clearly $\eta(\bigoplus_{i=1}^{m-1}M_i)+1\leq\eta(M)$, i.e. $\eta(\bigoplus_{i=1}^{m-1}M_i)\leq \eta(M)-1$. It follows, from the induction hypothesis, that all direct sum decompositions of $\bigoplus_{i=1}^{m-1}M_i$ have cardinality at most $\eta(M)-1$, in particular, $m-1\leq \eta(M)-1$, i.e. $m\leq \eta(M)$. We conclude that $M$ has finite Krull-Schmidt length.
The final statement of the theorem follows from $1\Leftrightarrow3$, $2\Rightarrow4$, and $4\Rightarrow 1$.
\end{proof}

\noindent It follows, from theorem 5.1 that if a module $M$ is such that there exists an additive function $\xi$ on $\mathcal{S}(M)$ (i.e. $\xi$ is such that $\xi(A\oplus B)=\xi(A)+\xi(B)$ for all $A,B\in\mathcal{S}(M)$ such that $A\oplus B\in\mathcal{S}(M)$), with image contained in $\mathbb{N}$, such that $\xi(N)\neq 0$ for all $N\in \mathcal{S}(M)$ such that $N\neq\left\{0\right\}$, then $M$ has finite Krull-Schmidt length. Moreover, the inequality
\[KS\ell(N)\leq \xi(N)\]
holds for all $N\in\mathcal{S}(M)$. In particular, if $M$ has finite length, finite Goldie dimension, or finite dual Goldie dimension, then the inequality
\[KS\ell(N)\leq min\left\{\ell(N),GdimN,dGdimN\right\}\]
holds for all $N\in\mathcal{S}(M)$. 

We define the Krull-Schmidt length of a ring $E$ ($KS\ell(E)$) as the supremum of the set of cardinalities of complete sets of orthogonal idempotents of $E$. Thus a ring $E$ has finite Krull-Schmidt length if and only if the set of cardinalities of sets of orthogonal idempotents is finite. Again, arguing as in proposition 2.1 we have the following proposition.

\begin{proposition}
Let $M$ be a module. Let $E$ denote the ring of endomorphisms, $End(M)$, of $M$. The following conditions are equivalent:
\begin{enumerate}
\item $M$ has finite Krull-Schmidt length.
\item The left $E$-module $_EE$ has finite Krull-Schmidt length.
\item The right $E$-module $E_E$ has finite Krull-Schmidt length.
\item The ring $E$ has finite Krull-Schmidt length.
\item There exists $n\geq 1$ such that all strictly ascending chains of idempotents of $E$ have length at most $n$.
\item There exists $n\geq 1$ such that all strictly descending chains of idempotents of $E$ have length at most $n$.
\item There exists $n\geq 1$ such that every set of orthogonal idempotents of $E$ has cardinality at most $n$.
\end{enumerate}
Moreover, if $M$ satisfies the conditions above, then 
\[KS\ell(M)=KS\ell(_EE)=KS\ell(E_E)=KS\ell(E)\]
and this number is the maximum of lengths of striclty ascending chains of idempotents of $E$, the maximum of lengths of strictly descending chains of idempotents of $E$, and the maximum of cardinalities of sets of orthogonal idempotents.
\end{proposition}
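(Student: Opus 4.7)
The plan is to reduce Proposition 5.2 to Theorem 5.1 via the order-isomorphism between the poset of idempotents of $E$ (with $e\leq e'$ iff $ee'=e=e'e$) and $\mathcal{S}(M)$, recalled in the proof of Proposition 2.1: the map $e\mapsto e(M)$ is a bijection onto $\mathcal{S}(M)$ carrying the idempotent order to the direct-summand order. Under this identification, a direct sum decomposition $M=\bigoplus_{i\in I}M_i$ corresponds precisely to a complete family $\{e_i\}_{i\in I}$ of orthogonal idempotents of $E$ (with $\sum_{i}e_i=1$) via $e_i(M)=M_i$, and conversely.

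First, I would observe that this correspondence immediately gives $KS\ell(M)=KS\ell(E)$ from the definitions. The same argument applied to $_EE$ and $E_E$, whose endomorphism rings (canonically $E^{\mathrm{op}}$ and $E$) share the same set of idempotents equipped with the same order, yields $KS\ell(_EE)=KS\ell(E_E)=KS\ell(E)$, establishing $(1)\Leftrightarrow(2)\Leftrightarrow(3)\Leftrightarrow(4)$ together with the numerical equalities in the moreover clause.

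Next, I would invoke Theorem 5.1 applied to $M$ and transport its conditions (2), (3), (4) under the identification: strictly ascending and strictly descending chains of direct summands of $M$ correspond to strictly ascending and strictly descending chains of idempotents of $E$, while independent subsets of $\mathcal{S}(M)$ correspond to sets of non-zero pairwise orthogonal idempotents of $E$. This produces the equivalences $(1)\Leftrightarrow(5)\Leftrightarrow(6)\Leftrightarrow(7)$ and, combined with the numerical identifications from Theorem 5.1, the remaining maximality statements in the moreover clause.

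The main subtle point will be reconciling a set of non-zero orthogonal idempotents of $E$ with a complete such set: given any orthogonal family $\{e_1,\dots,e_n\}$ of non-zero idempotents, if $e=\sum_{i}e_{i}\neq 1$ the enlargement $\{e_1,\dots,e_n,1-e\}$ is still orthogonal and now complete, while if $e=1$ the family is already complete; hence the suprema of cardinalities of non-zero orthogonal families and of complete orthogonal families agree, which is what reconciles condition (7) with the definition of $KS\ell(E)$ and with the bound for $M$ provided by Theorem 5.1.
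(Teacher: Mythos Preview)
Your approach is essentially the paper's: the paper's proof consists of the single sentence ``arguing as in Proposition 2.1,'' and you unfold exactly that, transporting Theorem 5.1 through the idempotent/direct-summand correspondence.

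One technical inaccuracy is worth flagging. The map $e\mapsto e(M)$ is \emph{not} a bijection from idempotents of $E$ onto $\mathcal{S}(M)$: distinct idempotents can share the same image (e.g.\ in $E=M_2(k)$ both $\begin{pmatrix}1&0\\0&0\end{pmatrix}$ and $\begin{pmatrix}1&1\\0&0\end{pmatrix}$ have image $k\oplus 0$), so there is no order-isomorphism in the literal sense you state. What is true, and what both Proposition 2.1 and your argument actually need, is that $e\mapsto e(M)$ is surjective and order-preserving, that $e\le e'$ with $e\ne e'$ forces $e(M)\subsetneq e'(M)$ (so strict idempotent chains map to strict summand chains of the same length), and that conversely any chain in $\mathcal{S}(M)$ or any independent family of summands can be lifted to a chain, respectively an orthogonal family, of idempotents of the same size. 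With this correction your transport of conditions (2)--(4) of Theorem 5.1 to conditions (5)--(7) here, together with your treatment of complete versus arbitrary orthogonal families, goes through and matches the paper's intended argument.
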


\noindent It follows, from proposition 5.2 that if a ring $E$ is left (right) artinian, has finite left (right) Goldie dimension, or is semilocal (i.e. if $E$ has finite dual Goldie dimension), then the inequality
\[KS\ell(E)\leq min\left\{\ell(E),Gdim(E),dGdim(E)\right\}\]
holds, where $\ell(E)$, $Gdim(E)$ and $dGdim(E)$ denote the length, the Goldie dimension, and the dual Goldie dimension of $E$ as a right or left $E$-module. Moreover, if $M$ is a module such that the ring of endomorphisms, $End(M)$, of $M$ satisfies any of these conditions, then the inequality
\[KS\ell(N)\leq min\left\{\ell(End(M)),GdimEnd(M),dGdimEnd(M)\right\}\] 
holds for all $N\in\mathcal{S}(M)$.

We end this section with a brief study of direct sum decompositions of modules with finite Krull-Schmidt length. We characterize those modules with finite Krull-Schmidt length such that all their indecomposable direct sum decompositions have the same cardinality.

\begin{lemma}
Let $M$ be a module. Let $N$ be a direct summand of $M$. Suppose $M$ has finite Krull-Schmidt length. If all indecomposable direct sum decompositions of $M$ have the same cardinality, then all indecomposable direct sum decompositions of $N$ have the same cardinality.
\end{lemma}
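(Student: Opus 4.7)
The plan is to proceed by contradiction: assume $N$ admits two indecomposable direct sum decompositions of different cardinalities, and use them to construct two indecomposable direct sum decompositions of $M$ of different cardinalities, contradicting the hypothesis on $M$.

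More concretely, write $M = N \oplus N'$ for some direct summand $N'$ of $M$. Since the class of Krull-Schmidt artinian modules (equivalently, since modules with finite Krull-Schmidt length are Krull-Schmidt artinian, and the class of such is) is closed under direct summands, both $N$ and $N'$ are Krull-Schmidt artinian, and in particular $N'$ admits a finite indecomposable direct sum decomposition $N' = \bigoplus_{t=1}^m L_t$. This $N'$ and its decomposition will be fixed throughout.

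Now suppose, for contradiction, that $N$ admits two indecomposable direct sum decompositions $N = \bigoplus_{i=1}^k N_i$ and $N = \bigoplus_{j=1}^{\ell} N'_j$ with $k \neq \ell$. Concatenating each of these with the fixed decomposition of $N'$ produces two indecomposable direct sum decompositions of $M$,
\[
M \;=\; \Bigl(\bigoplus_{i=1}^k N_i\Bigr) \oplus \Bigl(\bigoplus_{t=1}^m L_t\Bigr), \qquad M \;=\; \Bigl(\bigoplus_{j=1}^{\ell} N'_j\Bigr) \oplus \Bigl(\bigoplus_{t=1}^m L_t\Bigr),
\]
of cardinalities $k+m$ and $\ell+m$ respectively. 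Since $k \neq \ell$, these cardinalities differ, contradicting the assumption that all indecomposable direct sum decompositions of $M$ have the same cardinality.

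There is no real obstacle here; the one thing to verify is that $N'$ indeed admits a finite indecomposable direct sum decomposition, which is immediate from the fact that finite Krull-Schmidt length implies Krull-Schmidt artinian and the latter property passes to direct summands. The argument is essentially bookkeeping once the finite indecomposable decomposition of $N'$ is in hand.
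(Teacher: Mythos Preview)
Your argument is correct and is essentially identical to the paper's own proof: both take a complement $N'$ (the paper calls it $L$), fix a finite indecomposable decomposition of it, concatenate with two given indecomposable decompositions of $N$, and compare cardinalities. The only cosmetic difference is that you frame it as a proof by contradiction whereas the paper argues directly that $n+t=m+t$ forces $n=m$.
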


\begin{proof}
Suppose $M$ has finite Krull-Schmidt length. Suppose also that all indecomposable direct sum decompositions of $M$ have the same cardinality. Let $N$ be a direct summand of $M$. Let $N=\bigoplus_{i=1}^nN_i$ and $N=\bigoplus_{j=1}^mN'_j$ be two indecomposable direct sum decompositions of $N$. Let $L\in\mathcal{S}(M)$ be such that $M=N\oplus L$. Let $L=\bigoplus_{k=1}^tL_k$ be an indecomposable direct sum decomposition of $L$. Both direct sum decompositions
\[M=(\bigoplus_{i=1}^nN_i)\oplus(\bigoplus_{k=1}^tL_k) \ \mbox{and}\ M=(\bigoplus_{j=1}^mN'_j)\oplus (\bigoplus_{k=1}^tL_k)\]
of $M$ are indecomposable. The first one of these has cardinality $n+t$, while the second has cardinality $m+t$. It follows that $n+t=m+t$, that is $n=m$. This concludes the proof.  
\end{proof}

\noindent Let $\Omega$ be a class of modules and $\xi:\Omega\rightarrow\mathbb{N}$ be a function. We say that $\xi$ is a rank function if the equation
\[\xi(A\oplus B)\leq \xi(A)+\xi(B)\]
holds for all $A,B\in\Omega$. The functions $pdim$, $Edim$, $Kdim$, and $dKdim$ that associate to each module $M$ its projective, injective, Krull and dual Krull dimension respectively when it is appropriate are all rank functions. Additive functions as defined above are rank functions.
\begin{theorem}
Let $M$ be a module. Suppose $M$ has finite Krull-Schmidt length. The following conditions are equivalent:
\begin{enumerate}
\item Any two indecomposable direct sum decompositions of $M$ have the same cardinality.
\item The function $KS\ell$ is a rank function on $\mathcal{S}(M)$.
\item The function $KS\ell$ is an additive function on $\mathcal{S}(M)$.
\end{enumerate}
Moreover, if $M$ satisfies the conditions above, then for any $N\in\mathcal{S}(M)$, $KS\ell(N)$ is equal to the cardinality of any indecomposable direct decmposition of $M$.
\end{theorem}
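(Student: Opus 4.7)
The plan is to prove the cycle $3 \Rightarrow 2 \Rightarrow 1 \Rightarrow 3$ and then derive the moreover statement as a byproduct of the final implication. The implication $3 \Rightarrow 2$ is immediate, since any additive function with values in $\mathbb{N}$ is trivially a rank function in the sense defined above the theorem.

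For $2 \Rightarrow 1$, I would argue as follows. Fix any indecomposable direct sum decomposition $M=\bigoplus_{i=1}^n M_i$. Since each $M_i$ is indecomposable and nonzero, $KS\ell(M_i)=1$. Applying the rank function inequality inductively then gives $KS\ell(M)\leq \sum_{i=1}^n KS\ell(M_i)=n$. On the other hand the decomposition itself exhibits a direct sum decomposition of cardinality $n$, so by definition $KS\ell(M)\geq n$. Hence $KS\ell(M)=n$, and the same argument applied to any other indecomposable direct sum decomposition forces its cardinality to equal $KS\ell(M)$ as well.

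The main implication is $1 \Rightarrow 3$, and this is where the bulk of the work lies. Assume that every indecomposable direct sum decomposition of $M$ has the same cardinality, and let $A,B\in\mathcal{S}(M)$ with $A\oplus B\in\mathcal{S}(M)$. By Lemma 5.3, each of $A$, $B$, and $A\oplus B$ inherits the property that all its indecomposable direct sum decompositions share a common cardinality; denote these common values by $k(A)$, $k(B)$, and $k(A\oplus B)$. The key claim to establish is that $KS\ell(X)=k(X)$ for every such $X\in\mathcal{S}(M)$. The inequality $KS\ell(X)\geq k(X)$ is immediate from an indecomposable decomposition of $X$. For the reverse, take any direct sum decomposition $X=\bigoplus_{\ell=1}^s X_\ell$ with $X_\ell\neq 0$: since $X$ is Krull-Schmidt artinian (being a direct summand of the Krull-Schmidt artinian module $M$), Proposition 3.1 refines this into an indecomposable direct sum decomposition of $X$, whose cardinality is at least $s$ and equals $k(X)$; hence $s\leq k(X)$. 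Concatenating indecomposable decompositions $A=\bigoplus_{i=1}^{k(A)}A_i$ and $B=\bigoplus_{j=1}^{k(B)}B_j$ yields an indecomposable decomposition of $A\oplus B$ of cardinality $k(A)+k(B)$, so $k(A\oplus B)=k(A)+k(B)$. Combined with the identification $KS\ell=k$ this gives $KS\ell(A\oplus B)=KS\ell(A)+KS\ell(B)$, establishing additivity.

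The moreover statement is now essentially free: under the equivalent conditions, the identity $KS\ell(N)=k(N)$ established in the previous paragraph, applied to each $N\in\mathcal{S}(N)\subseteq\mathcal{S}(M)$, says exactly that $KS\ell(N)$ is the common cardinality of any indecomposable direct sum decomposition of $N$. The principal obstacle in the whole argument is the step $1 \Rightarrow 3$, and more specifically verifying the equality $KS\ell(X)=k(X)$; this requires simultaneously invoking Lemma 5.3 to propagate hypothesis 1 to every direct summand of $M$ and Proposition 3.1 to refine arbitrary decompositions into indecomposable ones, so that cardinalities can be compared.
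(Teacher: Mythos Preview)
Your proof is correct. The organization differs from the paper's, however, and the comparison is instructive.

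You run the cycle $3 \Rightarrow 2 \Rightarrow 1 \Rightarrow 3$ and place the substantive work in $1 \Rightarrow 3$: you invoke Lemma~5.3 to push hypothesis~1 down to every direct summand, then use Proposition~3.1 (refinement into indecomposables) to pin down $KS\ell(X)=k(X)$, and finally obtain additivity by concatenation. The paper instead uses the ``moreover'' clause as a hub: its main computation is $2 \Rightarrow$ (moreover), carried out by an induction that shows $KS\ell\bigl(\bigoplus_{i=1}^{k}N_i\bigr)=k$ whenever the $N_i$ are indecomposable summands of $M$, squeezing $KS\ell$ between $k$ and $k+KS\ell(N_{k+1})$ via the rank inequality. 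From there the paper reads off (moreover) $\Rightarrow 3$, $3 \Rightarrow 1$, and declares the remaining implications trivial. Your $2 \Rightarrow 1$ is effectively the paper's induction specialized to $N=M$, so the two arguments share that ingredient; what distinguishes your route is that you make the passage from 1 back to the other conditions fully explicit through Lemma~5.3, whereas the paper's ``the rest of the implications are trivial'' leaves exactly that step to the reader. In that sense your argument is the more self-contained of the two, at the cost of not deriving the moreover clause directly from~2.

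A minor typographical slip: in your last paragraph ``$N\in\mathcal{S}(N)\subseteq\mathcal{S}(M)$'' should read ``$N\in\mathcal{S}(M)$''.
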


\begin{proof}
Observe first that the last statement of the teorem, together with proposition 3.1, imply 3. Suppose now that $KS\ell$ is a rank function on $\mathcal{S}(M)$. We prove, by induction on $k$, that if $N_1,...,N_k\in\mathcal{S}(M)$ are indecomposable and such that $\bigoplus_{i=1}^kN_i\in\mathcal{S}(M)$, then $KS\ell(\bigoplus_{i=1}^kN_i)=k$. The base of the induction is trivial. Suppose now that the claim is true for $k$. Let $N_1,...,N_{k+1}\in\mathcal{S}(M)$ be indecomposable and such that $\bigoplus_{i=1}^{k+1}N_i\in\mathcal{S}(M)$. From the induction hypothesis it follows that $KS\ell(\bigoplus_{i=1}^kN_i)=k$. Hence, from the fact that $KS\ell(N_{k+1})=1$, and from the fact that
\[k<KS\ell(\bigoplus_{i=1}^{k+1}N_i)\leq k+KS\ell(N_{k+1})\]
it follows that $KS\ell(\bigoplus_{i=1}^{k+1}N_i)=k+1$. This proves our claim. From this and proposition 3.1 we conclude that 2 implies the last statement of the theorem. It is easily seen that 3, again with proposition 3.1 also implies the last statement of the theorem. From this, implications $3\Rightarrow 1 $ and $3\Rightarrow 2$ follow. The rest of the implications are trivial. This concludes the proof.
\end{proof}

\section{The $AKS_i$ Conditions}

\noindent We devote this final section to the comparison of certain finiteness conditions under the presence of finiteness conditions discussed in previous sections. Recall [6] that a module $M$ is said to be almost Krull-Schmidt ($AKS$ for short) if it only admits a finite number of direct sum decompositions up to decomposition isomorphisms. We will take the liberty here of saying that $M$ satisfies the $AKS_1$ condition when it is $AKS$. We will, further, say that $M$ satisfies the $AKS_2$ condition if it has only a finite number of indecomposable direct sum decompositions up to decomposition-isomorphisms, that $M$ satisfies the $AKS_3$ condition if $M$ has only a finite number of direct summands up to isomorphisms, and finally, we will say that $M$ satisfies the $AKS_4$ condition if it has only a finite number of indecomposable direct summands up to isomorphisms. It is clear, from our definitions that the $AKS_1$ condition implies the condition of a module having finite Krull-Schmidt length and all other $AKS_i$ conditions. It is also clear that both the $AKS_2$ and $AKS_3$ conditions imply the $AKS_4$ condition. The abelian group $\mathbb{Z}_2^{(\mathbb{N})}$ satisfies the $AKS_2$ and $AKS_4$ conditions, but it does not satisfy the $AKS_1$ condition, and it does not admit deviation on direct summands. [6, ex. 3.3] gives a series of examples of modules not admitting deviation on direct summands, satisfying the $AKS_3$ condition, but not satisfying the $AKS_1$ condition. [6, ex. 4.2] gives examples of noetherian modules, modules with finite Goldie dimension, and modules with finite dual Goldie dimension that do not satisfy the $AKS_1$ condition. Thus the condition of a module having finite Krull-Schmidt length, and thus the condition of a module being Krull-Schmidt artinitan, need not, in general, imply the $AKS_1$ condition. The following proposition says that under the assumption that all direct sum decompositions of a module $M$ refine into finite indecomposable direct sum decompositions, both the $AKS_1$ and $AKS_2$ conditions are equivalent over $M$

\begin{proposition}
Suppose all direct sum decompositions of $M$ refine into finite indecomposable direct sum decompositions (e.g. $M$ is Krull-Schmidt artinian). Then the following conditions are equivalent:
\begin{enumerate}
\item $M$ satisfies the $AKS_1$ condition.
\item $M$ satisfies the $AKS_2$ condition.
\end{enumerate}
\end{proposition}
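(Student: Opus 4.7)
The implication $1 \Rightarrow 2$ is immediate, since the set of decomposition-isomorphism classes of indecomposable direct sum decompositions of $M$ embeds into the set of decomposition-isomorphism classes of all direct sum decompositions of $M$. For the converse, I would assume $AKS_2$ together with the refinement hypothesis, and show that every direct sum decomposition of $M$ is decomposition-isomorphic to one obtained by grouping the summands of one of finitely many fixed indecomposable decompositions.

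First I would fix representatives $\mathcal{D}_i = \bigoplus_{j=1}^{n_i} N^{(i)}_j$, for $i = 1, \ldots, r$, of the decomposition-isomorphism classes of indecomposable direct sum decompositions of $M$, which are finite in number by $AKS_2$. Each $\mathcal{D}_i$ is finite because it is its own indecomposable refinement and every such refinement is finite by hypothesis. Next, I would take an arbitrary direct sum decomposition $M = \bigoplus_{\alpha \in A} M_\alpha$, discarding zero summands so that every $M_\alpha$ is nonzero. The refinement hypothesis supplies a finite indecomposable decomposition $M = \bigoplus_{\beta \in B} N_\beta$ together with a partition $\{C_\alpha\}_{\alpha \in A}$ of $B$ with $M_\alpha = \bigoplus_{\beta \in C_\alpha} N_\beta$; in particular $A$ is finite. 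By $AKS_2$, this refinement is decomposition-isomorphic to some $\mathcal{D}_i$ via a bijection $\phi : B \to \{1, \ldots, n_i\}$ with $N_\beta \cong N^{(i)}_{\phi(\beta)}$. Setting $P_\alpha = \phi(C_\alpha)$ yields a partition $\pi = \{P_\alpha\}_{\alpha \in A}$ of $\{1, \ldots, n_i\}$, and $M_\alpha \cong \bigoplus_{j \in P_\alpha} N^{(i)}_j$ for every $\alpha$.

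The upshot is that every direct sum decomposition of $M$ is decomposition-isomorphic to one of the form $M = \bigoplus_{P \in \pi}\bigl(\bigoplus_{j \in P} N^{(i)}_j\bigr)$ for some $i \in \{1, \ldots, r\}$ and some partition $\pi$ of $\{1, \ldots, n_i\}$. Since there are $r$ choices of $i$ and, for each, only finitely many partitions of the finite set $\{1, \ldots, n_i\}$, only finitely many decomposition-isomorphism classes of direct sum decompositions of $M$ occur, which is exactly $AKS_1$. The main point requiring care is that the assignment which sends a pair (indecomposable decomposition, partition of its index set) to the associated coarser decomposition is well defined up to decomposition-isomorphism: one must verify that the piecewise isomorphisms $N_\beta \cong N^{(i)}_{\phi(\beta)}$ combine, through the chosen partition, into summand-wise isomorphisms $M_\alpha \cong \bigoplus_{j \in P_\alpha} N^{(i)}_j$, which is routine from the additivity of direct sums under isomorphism.
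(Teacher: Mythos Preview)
Your proof is correct and follows essentially the same route as the paper's: both arguments fix finitely many representative indecomposable decompositions, observe that an arbitrary decomposition refines into one of them (up to decomposition-isomorphism), and then count the coarsenings by partitions of the finite index set. The only cosmetic difference is that the paper phrases the bound as a product of partition numbers $\pi_{m_i}$, whereas you simply note that the relevant set of pairs $(i,\pi)$ is finite; your added remark that the summand-wise isomorphisms assemble correctly through the partition is a welcome bit of care that the paper leaves implicit.
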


\begin{proof}
It suffices to prove $2\Rightarrow 1$. Suppose all direct sum decompositions of $M$ refine into finite indecomposable direct sum decompositions. Suppose also that $M$ satisfies the $AKS_2$ condition. Let $M=\bigoplus_{i=1}^m M_i$ be an indecomposable direct sum decomposition of $M$. If $\pi_m$ denotes the number of partitions of the set $\left\{1,...,m\right\}$, then there are exactly $\pi_m$ direct sum decompositions of $M$ which refine into $M=\bigoplus_{i=1}^mM_i$. Thus, if the set $\left\{M=\bigoplus_{j=1}^{m_i}M_{i_j}:1\leq i\leq n\right\}$ is a set of representatives of the set of all indecomposable direct sum decompositions of $M$ up to decomposition-isomorphisms, then there are at most $\prod_{i=1}^n\pi_{m_i}$ direct sum decompositions of $M$ which refine into one of the $M=\bigoplus_{j=1}^{m_i}M_{i_j}$'s. Now, given two decomposition-isomorphic direct sum decompositions of $M$, $M=\bigoplus_{\alpha\in A}M_\alpha$, and $M=\bigoplus_{\beta\in B}M'_\beta$, then clearly every direct sum decomposition of $M$ which refines into $M=\bigoplus_{\alpha\in A}M_\alpha$ is decomposition-isomorphic to a direct sum decomposition of $M$ which refines into $M=\bigoplus_{\beta\in B}M'_\beta$. From this and from the fact that every direct sum decomposition of $M$ refines into a direct sum decomposition of $M$ decomposition-isomorphic to one of the $M=\bigoplus_{j=1}^{m_i}M_{i_j}$'s we conclude that $M$ satifies the $AKS_1$ condition. 
\end{proof}

\noindent The next proposition says that, under the assumption of the condition of a module $M$ being Krull-Schmidt artinian, all three of the $AKS_i$ ($i=1,2,3$) conditions are equivalent over $M$. First we prove the following lemma.

\begin{lemma}
If $M$ is Krull-Schmidt artinian then there do not exist direct summands $N,N'$ of $M$ such that $N'$ is a proper direct summand of $N$, isomorphic to $N$. 
\end{lemma}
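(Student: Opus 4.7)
The plan is to assume for contradiction that such $N$ and $N'$ exist, and use the isomorphism $N'\cong N$ to transport the decomposition witnessing that $N'$ is a proper direct summand of $N$ down one level, producing a strictly descending infinite chain of direct summands of $M$, contradicting the descending chain condition.

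More concretely, I would first record that if $N'$ is a direct summand of $N$, and $N$ is a direct summand of $M$, then $N'$ is a direct summand of $M$ (combine the two complementary summands). So it suffices to produce an infinite strictly descending chain inside $\mathcal{S}(N)\subseteq \mathcal{S}(M)$. Write $N = N' \oplus K$ with $K \neq 0$ (possible since $N'$ is a \emph{proper} direct summand of $N$), and fix an isomorphism $\varphi : N \to N'$. Applying $\varphi$ to the decomposition $N = N' \oplus K$ yields a decomposition $N' = \varphi(N') \oplus \varphi(K)$ in which $\varphi(K)\cong K\neq 0$ and $\varphi(N')\cong N'\cong N$. Setting $N'' := \varphi(N')$, we have that $N''$ is a proper direct summand of $N'$, isomorphic to $N'$ (and hence to $N$).

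Iterating this construction yields a sequence $N > N' > N'' > N''' > \cdots$ of direct summands of $N$, each a proper direct summand of the previous one, and each isomorphic to $N$. By the observation in the first step, every term in this chain is a direct summand of $M$, so the chain lives in $\mathcal{S}(M)$. This is a strictly descending infinite chain in $\mathcal{S}(M)$, contradicting the assumption that $M$ is Krull-Schmidt artinian.

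I don't anticipate a serious obstacle: the only point that needs any care is checking that the witness $K$ for properness survives under the isomorphism, so that $N''$ is in turn a \emph{proper} summand of $N'$; this is immediate because $\varphi$ is an isomorphism and $K\neq 0$, so $\varphi(K)\neq 0$. Everything else is a straightforward iteration together with transitivity of the relation ``is a direct summand of'' for direct summands of a fixed module.
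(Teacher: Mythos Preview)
Your proof is correct and follows essentially the same approach as the paper: assume such $N,N'$ exist and recursively build an infinite strictly descending chain of direct summands of $M$, contradicting the Krull--Schmidt artinian hypothesis. In fact your write-up spells out the recursion (transporting the decomposition $N=N'\oplus K$ along the isomorphism $\varphi:N\to N'$) more explicitly than the paper does.
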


\begin{proof}
Suppose there exist direct summands $N$ and $N'$ of $M$ such that $N'$ is a proper direct summand of $N$, isomorphic to $N$. We can  then construct, recursevely, an infinite descending chain $N_1\geq N_2\geq...$ of direct summands of $M$ such that $N_{i+1}$ is a proper direct summand of $N_i$, isomorphic to $N_i$, for all $i\geq 1$, which contradicts the fact that $M$ is Krull-Schmidt artinian.
\end{proof}

\begin{proposition}
Suppose $M$ is Krull-Schmidt artinian. Then the following conditions are equivalent.
\begin{enumerate}
\item $M$ satisfies the $AKS_1$ condition.
\item $M$ satisfies the $AKS_2$ condition.
\item $M$ satisfies the $AKS_3$ condition.
\end{enumerate} 
\end{proposition}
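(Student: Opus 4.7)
The plan is to observe that Proposition 6.1, combined with Proposition 3.1, already gives $1\Leftrightarrow 2$, so the only work is to fit condition $3$ into this equivalence. I will establish $1\Rightarrow 3$ as an easy bookkeeping step and then prove $3\Rightarrow 2$, which is the substantive direction.

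For $1\Rightarrow 3$, let $N$ be any direct summand of $M$ and pick $N'$ with $M=N\oplus N'$. The decomposition-isomorphism class of the two-summand decomposition $M=N\oplus N'$ is determined by the unordered pair of isomorphism classes $\{[N],[N']\}$. Since under $AKS_1$ there are only finitely many two-summand decompositions up to decomposition-isomorphism, there are only finitely many such pairs, hence only finitely many isomorphism classes of direct summands of $M$, which is $AKS_3$.

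For $3\Rightarrow 2$, the key intermediate claim is that $M$ has finite Krull-Schmidt length. I first use Lemma 6.2 to rule out isomorphisms $I^m\cong I^{m'}$ with $m<m'$ whenever $I$ is a non-zero indecomposable direct summand of $M$ and both $I^m$ and $I^{m'}$ occur as direct summands of $M$: for then $I^m$ would be a proper direct summand of $I^{m'}$, and hence of $M$, isomorphic to $I^{m'}$, contradicting the lemma. So the various powers $I^0,I^1,I^2,\ldots$ that appear as direct summands of $M$ are pairwise non-isomorphic. Now suppose $AKS_3$ holds but $KS\ell(M)=\infty$. By Proposition 3.1 any decomposition refines into a finite indecomposable one, so there exist indecomposable decompositions of $M$ of arbitrarily large cardinality. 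By $AKS_3$ there are only finitely many isomorphism types of indecomposable direct summands, so a pigeonhole argument forces some indecomposable $I$ to occur with unbounded multiplicity across these decompositions; grouping summands shows that $I^m$ is a direct summand of $M$ for every $m\geq 0$. The previous paragraph then gives infinitely many pairwise non-isomorphic direct summands of $M$, contradicting $AKS_3$. Hence $KS\ell(M)$ is finite.

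Finally, once $KS\ell(M)$ is finite, every indecomposable direct sum decomposition of $M$ has cardinality at most $KS\ell(M)$, and by $AKS_3$ each summand comes from one of finitely many isomorphism classes, so the number of decomposition-isomorphism classes of indecomposable direct sum decompositions is bounded by the number of finite multi-sets of bounded size from a finite set, which is finite. This gives $AKS_2$, and Proposition 6.1 then returns $AKS_1$. The main obstacle is the combination of Lemma 6.2 with the pigeonhole step used to trap the infinite family $I^0,I^1,I^2,\ldots$ inside $\mathcal{S}(M)$; the remaining implications are essentially formal.
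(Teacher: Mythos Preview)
Your argument is correct. Both you and the paper reduce to Proposition~6.1 for $1\Leftrightarrow 2$ and isolate the real content in getting from $AKS_3$ back to finiteness of $KS\ell(M)$ via Lemma~6.2, but the two routes use the lemma on different families of summands. The paper argues $3\Rightarrow 1$ directly: given any decomposition $M=\bigoplus_{i=1}^m M_i$, the partial sums $\bigoplus_{i=1}^k M_i$ for $k\leq m$ form a chain of direct summands that Lemma~6.2 forces to be pairwise non-isomorphic, so $m$ is at most the number $n$ of isomorphism classes of direct summands; this gives the explicit bound $KS\ell(M)\leq n$ and then at most $\sum_{i=1}^n n^i$ decompositions. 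Your $3\Rightarrow 2$ instead passes through indecomposable refinements (Proposition~3.1), a pigeonhole on indecomposable types, and Lemma~6.2 applied to the powers $I^m$ of a single indecomposable. This is a genuine alternative and perfectly valid, but it is more circuitous: it invokes an extra ingredient (Proposition~3.1), yields only finiteness of $KS\ell(M)$ rather than a clean bound, and detours through $AKS_2$ before Proposition~6.1 brings you back to $AKS_1$. The paper's chain-of-partial-sums trick avoids indecomposables entirely and is worth knowing.
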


\begin{proof}
By proposition 6.1 it suffices to prove $3\Rightarrow 1$. Suppose that $M$ is Krull-Schmidt artinian.Suppose also that $M$ satisfies the $AKS_3$ condition. By lemma 6.2 there do not exist direct summands $N$ and $N'$ of $M$ such that $N'$ is a proper direct summand of $N$, isomorphic to $N$. Thus, if $M$ has $n$ direct summands up to isomorphisms, then $M$ does not admit direct sum decompositons of cardinality $>n$, since the existence of any such direct sum decomposition, say $M=\bigoplus_{i=1}^mM_i$ of $M$ ($m>n$), would imply that the set $\left\{\bigoplus_{i=1}^kM_i:i\leq m\right\}$ is a set of strictly more than $n$ direct summands of $M$ pairwise non-isomorphic. It clearly follows from this that $M$ does not admit more than $\sum_{i=1}^nn^i$ direct sum decompositions up to decomposition-isomorphisms.
\end{proof}

\noindent Finally, the following proposition says that, under the assumption that a module $M$ has finite Krull-Schmidt length, all $AKS_i$ conditions can be proven to be equivalent over $M$.

\begin{corollary}
Suppose $M$ has finite Krull-Schmidt length. Then the following conditions are equivalent.
\begin{enumerate}
\item $M$ satisfies the $AKS_1$ condition.
\item $M$ satisfies the $AKS_2$ condition.
\item $M$ satisfies the $AKS_3$ condition.
\item $M$ satisfies the $AKS_4$ condition.
\end{enumerate}
\end{corollary}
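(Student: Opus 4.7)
The plan is to leverage Proposition 6.3, which already gives $1 \Leftrightarrow 2 \Leftrightarrow 3$ whenever $M$ is Krull-Schmidt artinian, and to note that every module with finite Krull-Schmidt length is Krull-Schmidt artinian (remarked at the start of section 5). Therefore the only new work is the equivalence of $AKS_4$ with the others. Since every indecomposable direct summand is in particular a direct summand, the implication $3 \Rightarrow 4$ is immediate, so the entire problem reduces to proving $4 \Rightarrow 3$ under the assumption $KS\ell(M) < \infty$.

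For $4 \Rightarrow 3$, suppose that up to isomorphism the indecomposable direct summands of $M$ are exhausted by a finite list $N_1, \ldots, N_k$. Let $N \in \mathcal{S}(M)$. Being a direct summand of a Krull-Schmidt artinian module, $N$ is itself Krull-Schmidt artinian, hence admits a finite indecomposable direct sum decomposition $N = \bigoplus_{j=1}^{m} P_j$ by Proposition 3.1. Each $P_j$ is an indecomposable direct summand of $N$ and therefore of $M$, so $P_j \cong N_{i(j)}$ for some $1 \leq i(j) \leq k$. Grouping like summands one obtains an isomorphism
\[ N \;\cong\; \bigoplus_{i=1}^{k} N_i^{a_i} \]
for some tuple $(a_1, \ldots, a_k) \in \mathbb{N}^k$.

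To conclude finiteness of the number of isomorphism classes of such $N$, I would bound the $a_i$. Any complement $L$ of $N$ in $M$ yields a direct sum decomposition $M = N_1^{a_1} \oplus \cdots \oplus N_k^{a_k} \oplus L$ (up to isomorphism), so extending a direct sum decomposition of $N$ of cardinality $\sum_i a_i$ by a single summand gives a direct sum decomposition of $M$ of cardinality $\sum_i a_i + 1$ when $L \neq 0$, or of cardinality $\sum_i a_i$ when $L = 0$; in either case $\sum_i a_i \leq KS\ell(M)$. Since only finitely many tuples $(a_1, \ldots, a_k) \in \mathbb{N}^k$ satisfy $\sum_i a_i \leq KS\ell(M)$, the isomorphism type of $N$ ranges over a finite set, establishing $AKS_3$.

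The argument is essentially routine once one observes the right bookkeeping; the only mild obstacle is verifying that the exponents $a_i$ are controlled by $KS\ell(M)$ rather than merely by $KS\ell(N)$, which is handled by padding the decomposition of $N$ by any indecomposable decomposition of a complement $L$ inside $M$.
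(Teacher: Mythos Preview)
Your proof is correct and follows essentially the same approach as the paper's: after invoking Proposition~6.3 for the equivalence of the first three conditions, both you and the paper close the loop by a counting argument that combines the finiteness of $KS\ell(M)$ with the finitely many isomorphism types of indecomposable summands to bound the number of (isomorphism classes of) direct summands. Your write-up of $4\Rightarrow 3$ is in fact cleaner and more explicit than the paper's terse $4\Rightarrow 1$ paragraph, but the underlying idea is the same.
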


\begin{proof}
By proposition 6.1 and proposition 6.3 it suffices to prove $4\Rightarrow 1$. Suppose that $M$ has finite Krull-Schmidt length. Suppose also that $M$ satisfies the $AKS_4$ condition. If $KS\ell(M)=n$ and $M$ has $m$ direct summands up to isomorphisms, then $M$ has no more than $\sum_{i=1}^nm^i$ direct sum decompositions up to decomposition-isomorphisms. Thus $M$ satisfies the $AKS_3$ condition. It follows, by proposition 6.3 that $M$ satisfies the $AKS_1$ condition.
\end{proof}

\noindent Observe that from proposition 6.3 and corollary 6.4 a Krull-Schmidt artinian module satisfying the $AKS_4$ condition but not satisfying any of the other $AKS_i$ conditions would be an example of a Krull-Schmidt artinian module with infinite Krull-Schmidt length. Observe also that corollary 6.4 characterizes the complement of the class of all modules satisfying the $AKS_1$ condition inside the class of all modules with finite Krull-Schmidt length as the class of all modules with finite Krull-Schmidt length not satisfying any of the $AKS_i$ conditions. Thus, while every artinian module satisfies all $AKS_i$ conditions [6], there exist, in general, noetherian modules which fail to satisfy all $AKS_i$ conditions.

\section*{Acknowledgements}

\noindent The author would like to thank professor Jos\'e R\'ios-Montes for his interest, for his guidance, and for very helpful revisions of early versions of this paper. The author is also grateful with the anonymous referee for his many valuable corrections and suggestions. Finally, the author would like to dedicate this paper to the late professor Francisco Raggi without whose guidance, encouragement, and friendship, this paper would have not been possible. 

\section{Bibliography}

\noindent [1]	F. Anderson, K. Fuller, \textit{Rings and Categories of Modules}, Springer-Verlag (1991).	

\	

\noindent [2] G. M. Bergman, W. Dicks, \textit{Universal Derivations and Universal Ring Constructions}, Pacific J. Math. \textbf{79} (1978) 293-337.

\

\noindent [3] R. Camps, W. Dicks, \textit{On Semilocal Rings}, Israel J. Math. \textbf{81} (1993), 203-211.

\

\noindent [4]	D. S. Dummit, R. M. Foote, \textit{Abstract Algebra}, John Wiley, and Sons (2004).

\

\noindent [5]	A. Facchini, \textit{Module Theory, Endomorphism Rings and Direct Sum Decompositions in Some Classes of Modules}, Birkhauser-Verlag (1998).

\	

\noindent [6]	A. Facchini, D. Herbera, \textit{Modules With Only Finitely Direct Sum Decompositions up to Isomorphism}, Irish Mathematical Society Bulletin \textbf{50} (2003), 51-69.
	
\

\noindent [7]  B. L. Osofsky, \textit{A Remark on the Krull-Schmidt-Azumaya Theorem}, Canad. Math. Bull. \textbf{13} (1970) 501–505. 

\

\noindent [8] P. Vamos, \textit{The Holy Grail of Algebra: Seeking Complete Sets of Invariants}, Abelian Groups and Modules, A. Facchini, C. Menini, eds., Math and its Appl. 343, Kluwer, Dordrecht, 1995, 475-483.

\end{document}